\begin{document}

\newtheorem{theorem}{Theorem}
\newtheorem{proposition}[theorem]{Proposition}
\newtheorem{remark}[theorem]{Remark}
\newtheorem{lemma}[theorem]{Lemma}
\newcommand{\RR}{\mathbb{R}}
\newcommand{\NN}{\mathbb{N}}
\newcommand{\ZZ}{\mathbb{Z}}
\newcommand{\PP}{\mathbb{P}}
\newcommand{\EE}{\mathbb{E}}
\newcommand{\Var}{\mathbb{V}{\rm ar}}
\newcommand{\Cov}{\mathbb{C}{\rm ov}}
\newcommand{\II}{{\cal{I}}}
\newcommand{\Id}{\text{Id}}
\newcommand{\dive}{\operatorname{div}}
\newcommand{\eps}{\varepsilon}

\newcommand{\dps}{\displaystyle}
\newcommand{\dis}{\displaystyle}

\def\longrightharpoonup{\relbar\joinrel\rightharpoonup}

\title{A control variate approach based on a defect-type theory for variance reduction in stochastic homogenization}
\author{F. Legoll$^{1,2}$ and W. Minvielle$^{3,2}$\\
{\footnotesize $^1$ Laboratoire Navier, \'Ecole Nationale des Ponts et
Chauss\'ees, Universit\'e Paris-Est,}\\
{\footnotesize 6 et 8 avenue Blaise Pascal, 77455 Marne-La-Vall\'ee
Cedex 2, France}\\
{\footnotesize \tt legoll@lami.enpc.fr}\\
{\footnotesize $^2$ INRIA Rocquencourt, MATHERIALS research-team,}\\
{\footnotesize Domaine de Voluceau, B.P. 105,
78153 Le Chesnay Cedex, France}\\
{\footnotesize $^3$ CERMICS, \'Ecole Nationale des Ponts et
Chauss\'ees, Universit\'e Paris-Est,}\\
{\footnotesize 6 et 8 avenue Blaise Pascal, 77455 Marne-La-Vall\'ee
Cedex 2, France}\\
{\footnotesize \tt william.minvielle@cermics.enpc.fr}\\
}
\date{\today}

\maketitle

\begin{abstract}
We consider a variance reduction approach for the stochastic homogenization of divergence form linear elliptic problems. Although the exact homogenized coefficients are deterministic, their practical approximations are random. We introduce a control variate technique to reduce the variance of the computed approximations of the homogenized coefficients. Our approach is based on a surrogate model inspired by a defect-type theory, where a perfect periodic material is perturbed by rare defects. This model has been introduced in~\cite{ALB_cras} in the context of weakly random models. In this work, we address the fully random case, and show that the perturbative approaches proposed in~\cite{ALB_cras,ALB_mms} can be turned into an efficient control variable.

We theoretically demonstrate the efficiency of our approach in simple cases. We next provide illustrating numerical results and compare our approach with other variance reduction strategies. We also show how to use the Reduced Basis approach proposed in~\cite{clb_thomines} so that the cost of building the surrogate model remains limited. 
\end{abstract}

\section{Introduction}

In this work, we introduce a variance reduction approach based on the control variate technique for the homogenization of the following stochastic, elliptic, linear problem:
\begin{equation}
\label{eq:pb0-stoc}
-\dive\left(A\left(\frac{x}{\varepsilon},
\omega\right) \nabla u^\varepsilon \right) = f \ \ \text{in ${\mathcal D}$}, 
\qquad
u^\varepsilon(\cdot,\omega) = 0 \ \ \text{on $\partial {\mathcal D}$},
\end{equation}
set on a bounded domain $\mathcal D$ in $\RR^d$, where $f$ is a deterministic function in $L^2(\mathcal D)$.
The random matrix $A$ is assumed to be uniformly elliptic, bounded and stationary in a sense made precise below. 

It is well-known that, in the limit when $\varepsilon$ goes to 0, the above problem converges to the homogenized problem
\begin{equation}
\label{eq:pb0-stoc-homog}
-\dive\left(A^\star\nabla u^\star \right) = f \ \ \text{in ${\mathcal D}$}, 
\qquad
u^\star = 0 \ \ \text{on $\partial {\mathcal D}$},
\end{equation}
where the homogenized matrix $A^\star$ is deterministic, and given by an expectation of an integral involving the so-called corrector function, that solves a random auxiliary problem set on the {\em entire} space. In practice, the corrector problem is approximated by a problem set on a {\em bounded} domain $Q_N$ (see Section~\ref{sec:bp} below for details). A by-product of this truncation procedure is that the {\em deterministic} matrix $A^\star$ is in practice approximated by a {\em random}, apparent homogenized matrix $A^\star_N(\omega)$. Randomness therefore comes again into the picture. In this work, we introduce a variance reduction approach to obtain practical approximations of $A^\star$ with a smaller variance. Our approach is a control variate technique, which is based on a surrogate random model, simple enough to allow for easier computations, and close enough to the reference model to eventually improve the accuracy. 

We mention that, in our previous works~\cite{mprf,banff,cedya}, we have already proposed variance reduction approaches to compute better approximations of $A^\star$. We used there the technique of antithetic variables, which is a generic variance reduction approach. In addition, we have shown in~\cite{LM13} that this technique carries over to nonlinear stochastic homogenization problems, when the problem at hand is formulated as a variational convex problem. In this work, we return to the linear equation~\eqref{eq:pb0-stoc}, and design an approach based on the control variate technique, where a surrogate model is used to improve the computational efficiency. Our approach here is therefore much more specific to the problem at hand than the antithetic variable approaches proposed previously. We therefore expect this technique to provide better results. This is indeed the case, as discussed along the numerical examples of Section~\ref{sec:num_low}.

Generally speaking, control variate approaches are based on using surrogate models as a kind of preconditioner (see Section~\ref{sec:CV_101} below for more details). In this work, the surrogate model that we use is inspired by a defect-type model, introduced in~\cite{ALB_cras,ALB_ccp,ALB_mms} in the context of weakly random models. The model considered there is that of a perfect periodic material perturbed by rare defects. These defects may introduce a significant change in the local properties of the random matrix $A(x,\omega)$. However they only occur with a small probability $\eta$. In that setting, when $\eta$ is small, the authors of~\cite{ALB_cras,ALB_ccp,ALB_mms} have shown that a good approximation of the homogenized properties can be obtained by only solving deterministic problems rather than random problems, as usually required in stochastic homogenization. In this work, we build our surrogate model upon the ideas of~\cite{ALB_cras,ALB_ccp,ALB_mms}. However, we address the regime when $\eta$ is not small, hence perturbative approaches are not accurate enough.

\medskip

Our article is organized as follows. In the sequel of this introduction, we present in more details some basic elements of stochastic homogenization, situate the questions under consideration in a more general setting, and introduce the control variate approach in a general setting (see Section~\ref{sec:CV_101}). In Section~\ref{sec:weakly_random}, we recall the weakly stochastic model introduced in~\cite{ALB_cras,ALB_ccp,ALB_mms}. 

Next, in Section~\ref{sec:CV}, we describe how to use this weakly stochastic model to build surrogate models that can be used in the ``fully random'' (non perturbative) regime. We introduce two control variate approaches. The first approach (see Section~\ref{sec:order-1}) is based on a first-order weakly stochastic approach, where defects are considered as {\em isolated} from one another. The second one (see Section~\ref{sec:order-2}) is based on a second-order weakly stochastic approach, where {\em pairs of defects} are considered. The main qualitative difference between these two control variate approaches is that the second one takes into account the geometry, whereas the first one essentially only depends on $\dis \int_{Q_N} A(x,\omega) \, dx$. It is well known that, in dimension $d \geq 2$, geometry -- i.e. the way different materials are located one with respect to the other -- matters in the homogenization process. The fact that our second approach takes into account the geometry is thus a very interesting feature.  

We next collect in Section~\ref{sec:theo} some elements of theoretical analysis. We first consider the one-dimensional case (Section~\ref{sec:theo_1D}) and provide there a complete analysis of our approach (see Propositions~\ref{prop:theo_1D} and~\ref{prop:theo_1D_suite}). We show that the variance of the apparent homogenized coefficient scales as $N^{-1}$ (where $N$ is the size of the large domain on which, in practice, the corrector problem is solved), while it is decreased to $N^{-2}$ (resp. $N^{-3}$) when using our first-order (resp. second-order) control variate approach. In Section~\ref{sec:theo_dD}, we next turn to the multi-dimensional case. Our main result is Lemma~\ref{lem:variance-scaling}. 

Section~\ref{sec:num} is devoted to numerical experiments. We quantitatively demonstrate the efficiency of our approach on two test cases in Sections~\ref{sec:num_low} and~\ref{sec:num_high}. As pointed out above, our second approach is based on considering pairs of defects. In order to keep limited the offline cost associated to building the surrogate model, we show in Section~\ref{sec:num_RB} that it is possible to use the Reduced Basis approach introduced in~\cite{clb_thomines}: the precomputation cost is then dramatically decreased, while the gain in variance with respect to a Monte Carlo approach remains similar. 

\subsection{Homogenization theoretical setting}

To begin with, we introduce the basic setting of stochastic homogenization we employ. We refer to~\cite{papa} for some seminal contribution, to~\cite{engquist-souganidis} for a general, numerically oriented presentation, and to~\cite{blp,cd,jikov} for classical textbooks. We also refer to~\cite{enumath} and the review article~\cite{singapour} (and the extensive bibliography contained therein) for a  presentation of our particular setting. Throughout this article, $(\Omega, {\mathcal F}, \PP)$ is a probability space and we denote by $\dps \EE(X) = \int_\Omega X(\omega) d\PP(\omega)$ the expectation of any random variable $X\in L^1(\Omega, d\PP)$. We next fix $d\in {\mathbb N}^\star$ (the ambient physical dimension), and assume that the group $(\ZZ^d, +)$ acts on $\Omega$. We denote by $(\tau_k)_{k\in \ZZ^d}$ this action, and assume that it preserves the measure $\PP$, that is, for all $\dps k\in \ZZ^d$ and all $A \in {\cal F}$, $\dps \PP(\tau_k A) = \PP(A)$. We assume that the action $\tau$ is {\em ergodic}, that is, if $A \in {\mathcal F}$ is such that $\tau_k A = A$ for any $k \in \ZZ^d$, then $\PP(A) = 0$ or 1. In addition, we define the following notion of stationarity (see~\cite{cras-stoc,jmpa}): a function $F \in L^1_{\rm loc}\left(\RR^d, L^1(\Omega)\right)$ is {\em stationary} if
\begin{equation}
\label{eq:stationnarite-disc}
\forall k \in \ZZ^d, \quad
F(x+k, \omega) = F(x,\tau_k\omega)
\quad
\text{a.e. in $x$ and a.s.}
\end{equation}
In this setting, the ergodic theorem~\cite{krengel,shiryaev,tempelman} can be stated as follows: 
{\it Let $F\in L^\infty\left(\RR^d, L^1(\Omega)\right)$ be a stationary random
variable in the above sense. For $k = (k_1,k_2, \dots, k_d) \in \ZZ^d$,
we set $\displaystyle |k|_\infty = \sup_{1\leq i \leq d} |k_i|$. Then
$$
\frac{1}{(2N+1)^d} \sum_{|k|_\infty\leq N} F(x,\tau_k\omega)
\mathop{\longrightarrow}_{N\rightarrow \infty}
\EE\left(F(x,\cdot)\right) \quad \mbox{in }L^\infty(\RR^d),
\mbox{ almost surely}.
$$
This implies (denoting by $Q$ the unit cube in $\RR^d$) that
$$
F\left(\frac x \varepsilon ,\omega \right)
\mathop{\longrightharpoonup}_{\varepsilon \rightarrow 0}^*
\EE\left(\int_Q F(x,\cdot)dx\right) \quad \mbox{in }L^\infty(\RR^d),
\mbox{ almost surely}.
$$
}

Besides technicalities, the purpose of the above setting is simply to formalize that, even though realizations may vary, the function $F$ at point $x \in \RR^d$ and the function $F$ at point $x+k$, $k \in \ZZ^d$, share the same law. In the homogenization context we now turn to, this means that the local, microscopic environment (encoded in the matrix field $A$ in~\eqref{eq:pb0-stoc}) is everywhere the same \emph{on average}. From this, homogenized, macroscopic properties will follow. In addition, and this is evident reading the above setting, the microscopic environment has a relation to an underlying \emph{periodic} structure (thus the integer shifts $k$ in (\ref{eq:stationnarite-disc})).  

\medskip

We consider problem (\ref{eq:pb0-stoc}), where ${\mathcal D}$ is an open, 
bounded domain of $\RR^d$ and where $f \in L^2({\mathcal D})$ is deterministic. The random matrix $A$ is assumed stationary in the sense of~\eqref{eq:stationnarite-disc}. We also assume that $A$ is bounded and that, in the sense of quadratic forms, $A$ is positive and almost surely bounded away from zero: there exist deterministic constants $c$ and $C$ such that, almost surely,
\begin{equation}
\label{eq:elliptic}
\| A (\cdot,\omega) \|_{L^\infty(\RR^d)} \leq C
\quad \text{and} \quad
\forall \xi \in \RR^d, \quad \xi^T A(x,\omega) \xi \geq c \xi^T \xi
\quad \text{a.e.}
\end{equation}
In this specific setting, the solution $u^\eps(\cdot,\omega)$ to~\eqref{eq:pb0-stoc} converges (when $\eps$ goes to 0) to the solution $u^\star$ to the homogenized problem~\eqref{eq:pb0-stoc-homog} almost surely, weakly in $H^1({\cal D})$ and strongly in $L^2({\cal D})$. The homogenized matrix $A^\star$ that appears in (\ref{eq:pb0-stoc-homog}) reads
\begin{equation}
\label{eq:matrice-homogeneisee}
\forall p \in \RR^d, \quad
A^\star \, p = {\mathbb E} \left[ \int_Q A(x,\cdot) \ (\nabla w_p(x,\cdot) + p) \, dx \right],
\qquad Q=(0,1)^d,
\end{equation}
where, for any vector $p \in \RR^d$, the \emph{corrector} $w_p$ is the solution (unique up to the addition of a random constant) to the following corrector problem:
\begin{equation}
\label{eq:correcteur-complet}
\left\{
\begin{array}{l}
\dps -\dive\left[A(\nabla w_p + p)\right] = 0 \quad
\text{in } \RR^d \ \text{ a.s.},
\vspace{6pt}\\
\dps \nabla w_p \text{ is stationary in the sense of~\eqref{eq:stationnarite-disc},} \qquad
\int_Q {\mathbb E}(\nabla w_p) = 0.
\end{array}\right.
\end{equation}

\subsection{Practical approximation of the homogenized matrix}
\label{sec:bp}

The corrector problem~\eqref{eq:correcteur-complet} is set on the entire space $\RR^d$, and is therefore challenging to solve. Approximations are in order. In practice, the deterministic matrix $A^\star$ is approximated by the random matrix $A^\star_N (\omega)$ defined by
\begin{equation}
\label{eq:A-N}
\forall p \in \RR^d, \quad A^\star_N (\omega) \, p = 
\frac{1}{|Q_N|} \int_{Q_N} A(x,\omega) \left(p + \nabla w_p^N(x,\omega)
\right) \, dx,
\end{equation}
which is obtained by solving the corrector problem on a \emph{truncated} domain, say the cube $Q_N = (-N/2,N/2)^d$:
\begin{equation} 
\label{eq:correcteur-random-N}
-\dive \left(A(\cdot,\omega)
\left( p +  \nabla w_p^N(\cdot,\omega)\right) \right) = 0,
\qquad
w_p^N(\cdot,\omega) \ \mbox{is $Q_N$-periodic}.
\end{equation}
As briefly explained above, although $A^\star$ itself is a deterministic object, its practical approximation $A^\star_N$ is random. It is only in the limit of infinitely large domains $Q_N$ that the deterministic value is attained. Indeed, as shown in~\cite{bourgeat}, we have 
$$
\lim_{N \to \infty} A^\star_N(\omega) = A^\star \quad \text{almost surely.} 
$$

Many studies have been recently devoted to establishing sharp estimates on the convergence of the random apparent homogenized quantities (computed on $Q_N$) to the exact deterministic homogenized quantities. We refer e.g. to~\cite{bourgeat,GNO_invent,nolen,yuri} and to the comprehensive discussion of~\cite[Section 1.2]{mprf}. We take here the problem from a slightly different perspective. We observe that the error
$$
A^\star - A_N^\star(\omega)
=
\Big(
A^\star - \EE \left[ A_N^\star \right]
\Big)
+
\Big(
\EE \left[ A_N^\star \right]
- A_N^\star(\omega)
\Big)
$$
is the sum of a systematic error and of a statistical error (the first and second terms in the above right-hand side, respectively). We focus here on the {\em statistical error}, and propose approaches to reduce the confidence interval of empirical means approximating $\EE \left[ A_N^\star \right]$, for a given truncated domain $Q_N$. Optimal estimates on the variance of $A_N^\star$ have been established in~\cite[Theorem 1.3 and Proposition 1.4]{nolen}. For a setting slightly different from ours (namely for homogenization problems set on random {\em lattices}), optimal estimates on the systematic and statistical errors have been established in~\cite[Theorem 2]{GNO_invent}. The authors noted there that ``the systematic error is much smaller than the statistical error'', in the sense that the latter decays with a slower rate with respect to $N$ than the former. For large values of $N$, the statistical error (that we address in this work) is therefore dominating over the systematic error.

\medskip

A standard technique to compute an approximation of $\EE \left[ \left( A^\star_N \right)_{ij} \right]$ (for any entry $ij$) is to consider $M$ independent and identically distributed realizations of the field $A$, solve for each of them the corrector problem~\eqref{eq:correcteur-random-N} (thereby obtaining i.i.d. realizations $A^{\star,m}_N(\omega)$) and proceed following a Monte Carlo approach:
\begin{equation}
\label{eq:MC_estim_homog}
\EE \left[ \left( A^\star_N \right)_{ij}\right]
\approx
I^{\rm MC}_M := \frac{1}{M} \sum_{m=1}^M \left( A^{\star,m}_N(\omega) \right)_{ij}.
\end{equation}
In view of the Central Limit Theorem, we know that our quantity of interest $\EE \left[ \left( A^\star_N \right)_{ij} \right]$ asymptotically lies in the confidence interval
$$
\left[
I^{\rm MC}_M - 1.96 \frac{\sqrt{\Var \left[ \left( A^\star_N \right)_{ij} \right]}}{\sqrt{M}}
,
I^{\rm MC}_M + 1.96 \frac{\sqrt{\Var \left[ \left( A^\star_N \right)_{ij} \right]}}{\sqrt{M}}
\right]
$$
with a probability equal to 95 \%. 

In this article, we show that, using a control variate approach, we can design a practical approach that, for any finite $N$, allows to compute a better approximation of $\EE \left[ \left( A^\star_N \right)_{ij} \right]$ 
than $I^{\rm MC}_M$. Otherwise stated, for an equal computational cost, we obtain a more accurate (i.e. with a smaller confidence interval) approximation. 

\subsection{Control variate approach}
\label{sec:CV_101}

Before presenting our specific approach, we describe here the control variate approach in a general context (see~\cite[page 277]{fishman}). Consider a general probability space $(\Omega, {\cal F}, \PP)$ and a scalar random variable $X \in L^2(\Omega, \RR)$. Our aim is to compute its expectation $\EE(X)$. In the sequel, we will use that approach for the random variable $\left( A^\star_N(\omega) \right)_{ij}$, for any entry $1 \leq i, j \leq d$.

As always, a first possibility is to resort to $M$ i.i.d. realizations of $X$, denoted $X^m(\omega)$ for $1 \leq m \leq M$. The expectation is then approximated by the Monte Carlo empirical mean 
$$
I^{\rm MC}_M := \frac{1}{M} \sum_{m=1}^M X^m(\omega)
$$
and we know that, with a probability equal to 95 \%, $\EE \left[ X \right]$ asymptotically lies in the confidence interval
\begin{equation}
\label{eq:confidence_MC}
\left[
I^{\rm MC}_M - 1.96 \frac{\sqrt{\Var \left[ X \right]}}{\sqrt{M}}
,
I^{\rm MC}_M + 1.96 \frac{\sqrt{\Var \left[ X \right]}}{\sqrt{M}}
\right].
\end{equation}
To reduce the variance of the estimation, consider now a random variable $Y \in L^2(\Omega, \RR)$, the expectation of which is analytically known. Then, for any scalar deterministic parameter $\rho$ to be fixed later, we consider the \emph{controlled variable}
\begin{equation}
\label{eq:control-variate-begin}
D_\rho(\omega) = X(\omega) - \rho \Big( Y(\omega) - \EE[Y] \Big).
\end{equation}
Since $\EE[Y]$ is known exactly, sampling realizations of $D_\rho$ amounts to sampling realizations of $X$ and $Y$. We obviously have $\EE[D_\rho] = \EE[X]$. To approximate $\EE[X]$, the control variate approach consists in performing a standard Monte Carlo approximation on $D_\rho$. We hence consider $M$ i.i.d. realizations of $D_\rho$, denoted $D_\rho^m(\omega)$, introduce the empirical mean 
$$
I^{\rm CV}_M := \frac{1}{M} \sum_{m=1}^M D_\rho^m(\omega)
$$
and write that, with a probability equal to 95 \%, $\EE[D_\rho] = \EE \left[ X \right]$ asymptotically lies in the confidence interval
\begin{equation}
\label{eq:confidence_CV}
\left[
I^{\rm CV}_M - 1.96 \frac{\sqrt{\Var \left[ D_\rho \right]}}{\sqrt{M}}
,
I^{\rm CV}_M + 1.96 \frac{\sqrt{\Var \left[ D_\rho \right]}}{\sqrt{M}}
\right].
\end{equation}
If $\rho$ and $Y$ are such that $\Var \left[ D_\rho \right] < \Var \left[ X \right]$, then the width of the above confidence interval is smaller than that of~\eqref{eq:confidence_MC}, and hence we have built a more accurate approximation of $\EE \left[ X \right]$.

\medskip

We now detail how to choose $\rho$ and $Y$ in~\eqref{eq:control-variate-begin}. Suppose for now that $Y$ is given. We wish to pick $\rho$ such that the variance of $D_\rho$ is minimal. Writing that 
$$
\Var[D_\rho] = \Var[X]- 2\rho \Cov [X,Y] + \rho^2 \Var[Y],
$$
we see that the optimal value of $\rho$ reads
\begin{equation}
\label{eq:rho_star}
\rho^\star 
= 
\text{argmin} \ \Var[D_\rho]
=
\frac {\Cov[X,Y]}{\Var[Y]}.
\end{equation}
For this choice, we have, using the Cauchy-Schwarz inequality,
$$
\Var[D_{\rho^\star}] 
= 
\Var[X] \left( 1 - \frac{\left( \Cov[X,Y] \right)^2}{\Var[X] \Var[Y]} \right) \leq \Var[X].
$$
We thus observe that, for any choice of $Y$, we can choose $\rho$ such that the variance of $D_\rho$ is indeed smaller than that of $X$. Of course, the ratio of variances $\dis \frac{\Var[D_{\rho^\star}]}{\Var[X]}$, which is directly related to the gain in accuracy, depends on $Y$, and more precisely on the value of $\dps \frac{\left( \Cov[X,Y] \right)^2}{\Var[X] \Var[Y]}$. The larger the correlation between $X$ and $Y$, the better. In contrast to the choice of $\rho$, the choice of $Y$ is problem dependent. In addition, the control variable $Y$ needs to be {\em random}.

\begin{remark}
\label{rem:choix_rho}
In practice, we do not have access to the optimal value~\eqref{eq:rho_star}, which involves exact expectations. One possibility (which is the one we adopt in this work) is to replace~\eqref{eq:rho_star} by the empirical estimator
$$
\rho^\star 
\approx 
\frac{
\sum_{m=1}^M (X^m(\omega) - \mu_M(X)) \, (Y^m(\omega) - \EE[Y])
}{
\sum_{m=1}^M (Y^m(\omega) - \EE[Y])^2
},
$$
where $\dis \mu_M(X) = \frac{1}{M} \sum_{m=1}^M X^m(\omega)$. This choice corresponds to minimizing with respect to $\rho$ the empirical variance of $D_\rho$ defined as $\dis \frac{1}{M} \sum_{m=1}^M \left( D^m_\rho(\omega) - \mu_M(X) \right)^2$, where $D^m_\rho(\omega) = X^m(\omega) - \rho \Big( Y^m(\omega) - \EE[Y] \Big)$.
\end{remark}

\section{A weakly random setting: rare defects in a periodic structure}
\label{sec:weakly_random}

As pointed out above, the surrogate model that we use to build our controlled variable is inspired by a defect-type model, introduced in~\cite{ALB_cras,ALB_ccp,ALB_mms} in the context of weakly random models, and that we describe now. 

\subsection{Presentation of the model}

Assume that, in~\eqref{eq:pb0-stoc}, the random matrix $A$ is of the form
\begin{equation}
\label{eq:weak-random}
A(x,\omega) = A_\eta(x,\omega) = A_{\rm per}(x) + b_\eta(x,\omega) \Big( C_{\rm per}(x) - A_{\rm per}(x) \Big)
\end{equation}
where $A_{\rm per}$ and $C_{\rm per}$ are $\ZZ^d$-periodic matrices that are bounded and positive in the sense of~\eqref{eq:elliptic}, and
\begin{equation}
\label{eq:random-form1}
b_\eta(x,\omega) = \sum_{k \in \mathbb{Z}^d}  \mathbf{1}_{Q + k}(x) B^\eta_k(\omega),
\end{equation}
where $(B^\eta_k)_{k \in \ZZ^d}$ are i.i.d. scalar random variables. The matrix $A$ is indeed stationary in the sense of~\eqref{eq:stationnarite-disc}. We furthermore assume that $B_k^\eta$ follows a Bernoulli law of parameter $\eta \in (0,1)$:
\begin{equation}
\label{eq:ber}
\PP(B^\eta_k = 1) = \eta, \quad \PP(B^\eta_k = 0) = 1-\eta.
\end{equation}
The matrix $A(x,\omega)$ then satisfies assumption~\eqref{eq:elliptic}. 

In each cell $Q+k$, the field $A$ is equal to $A_{\rm per}$ with the probability $1-\eta$, and equal to $C_{\rm per}$ with the probability $\eta$. When $\eta$ is small, then~\eqref{eq:weak-random}--\eqref{eq:random-form1}--\eqref{eq:ber} models a periodic material (described by $A_{\rm per}$) that is randomly perturbed (and then described by $C_{\rm per}$). The perturbation is rare when $\eta$ is small (therefore the material is described by $A_{\rm per}$ ``most of the time''), and thus it can be considered as a defect. However, the perturbation is not small in $L^\infty$ norm: $\left\| C_{\rm per} - A_{\rm per} \right\|_{L^\infty}$ is not assumed to be small. We refer to~\cite{ALB_mms} for practical examples motivating this framework.

On Fig.~\ref{fig:cercles}, we show two realizations of the field $A_\eta(x,\omega)$ (on the domain $Q_N$ for $N=20$) for some specific choices of $A_{\rm per}$ and $C_{\rm per}$ (see~\cite[Fig. 4.2]{ALB_mms} for more details). On the right part of that figure, we set $\eta = 0.4$, which is close to the value $\eta=1/2$, when defects are as frequent as non-defects.

Note that specifying $A_\eta(x,\omega)$ on $Q_N$ simply amounts to specifying the values of $B_k^\eta(\omega)$ for all $k$ such that $k+Q \subset Q_N$.

\begin{figure}[htbp]
\begin{center}
\includegraphics[width=6cm]{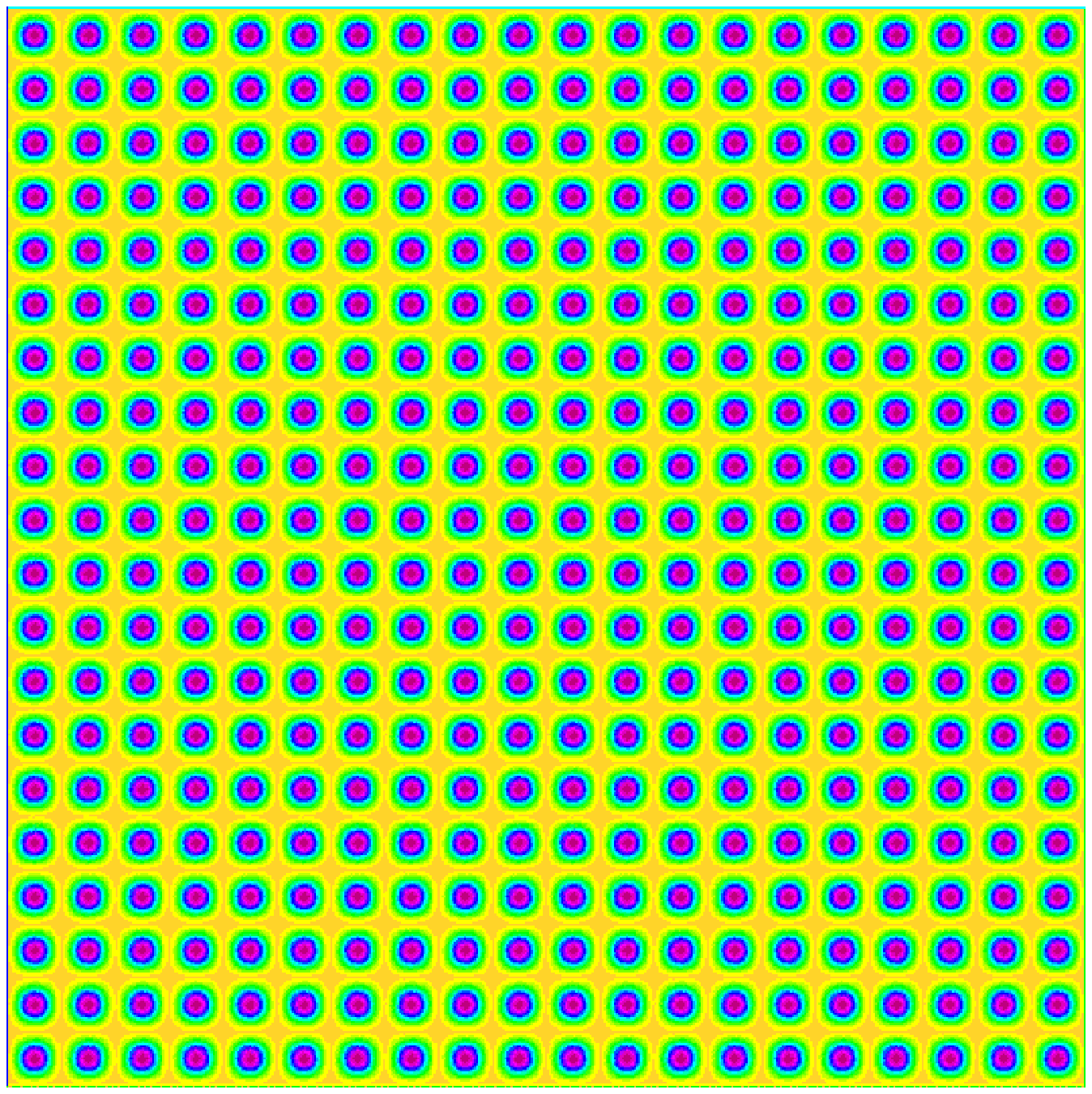}
\includegraphics[width=6cm]{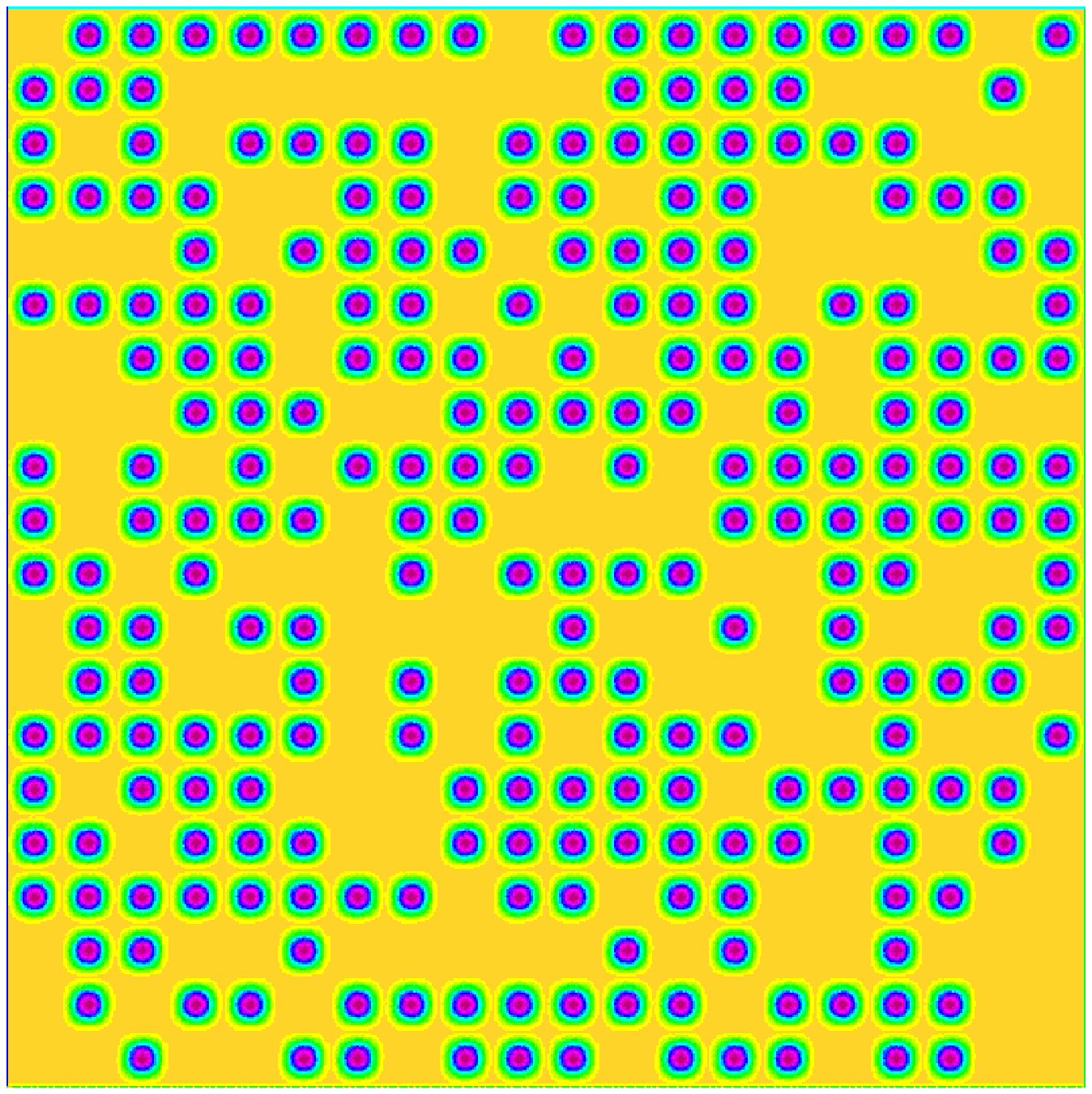}
\end{center}
\caption{Two instances of material~\eqref{eq:weak-random}. Left ($\eta=0$): perfect material with circular inclusions located on a periodic network. Right ($\eta=0.4$): perturbed material (each inclusion is deleted with a probability equal to 0.4). Courtesy A. Anantharaman and C. Le Bris.
\label{fig:cercles}
}
\end{figure}

\medskip

The above setting is actually quite general. Consider for instance a classical test-case, the random checkerboard case:
$$
A(x,\omega) = \sum_{k \in \mathbb{Z}^d}  \mathbf{1}_{Q + k}(x) X_k(\omega),
$$
where $X_k$ are i.i.d. random variables satisfying $\PP(X_k = \alpha) = \PP(X_k = \beta) = 1/2$. This model falls into the framework~\eqref{eq:weak-random}--\eqref{eq:random-form1}--\eqref{eq:ber} with
$$
A_{\rm per} = \alpha \, \Id, \quad C_{\rm per} = \beta \, \Id, \quad \eta = 1/2.
$$
An alternate choice (corresponding to choosing a different reference periodic materials) is 
$$
A_{\rm per} = \beta \, \Id, \quad C_{\rm per} = \alpha \, \Id, \quad \eta = 1/2.
$$
In this work, we restrict our attention to the case~\eqref{eq:weak-random}--\eqref{eq:random-form1}--\eqref{eq:ber}, i.e. when $B_k^\eta$ are i.i.d. Bernoulli random variables. This is the case specifically studied in~\cite{ALB_mms}. See~\cite{ALB_cras,ALB_ccp} for more general settings.

\subsection{Weakly-random homogenization result}
\label{sec:ALB_result}

Consider the model~\eqref{eq:weak-random}--\eqref{eq:random-form1}--\eqref{eq:ber}. The random variable $B_k^\eta(\omega)$ can take only two values, 0 or 1. Therefore, on the domain $Q_N$, there are only a finite number of realizations of $A_\eta(x,\omega)$. The realizations with the highest probability are as follows.
 
With probability $(1-\eta)^{|Q_N|}$, there are no defects in $Q_N$, and the realization actually corresponds to the perfect periodic situation. We introduce the periodic corrector $w_p^0$, solution to
\begin{equation} 
\label{eq:correcteur-per}
-\dive \left(A_{\rm per} \left( p +  \nabla w_p^0\right) \right) = 0,
\qquad
w_p^0 \ \mbox{is $Q$-periodic},
\end{equation}
and the associated matrix $A^\star_{\rm per}$, obtained by periodic homogenization:
\begin{equation}
\label{eq:A-per}
\forall p \in \RR^d, \quad
A^\star_{\rm per} \, p = 
\int_Q A_{\rm per} \left(p + \nabla w_p^0 \right).
\end{equation}
With probability $\eta (1-\eta)^{|Q_N|-1}$, there is a unique defect in $Q_N$, located, say, in the cell $k+Q$ (see Fig.~\ref{fig:1-2-defects}). Let us define
\begin{equation}
\label{eq:A1k}
A_1^k = A_{\rm per} + 1_{k+Q} \Big( C_{\rm per} - A_{\rm per} \Big),
\end{equation}
the associated corrector $w_p^{1,k,N}$, solution to
\begin{equation} 
\label{eq:correcteur-1N}
-\dive \left(A_1^k \left( p +  \nabla w_p^{1,k,N}\right) \right) = 0,
\qquad
w_p^{1,k,N} \ \mbox{is $Q_N$-periodic},
\end{equation}
and the homogenized matrix $A^\star_{1,k,N}$, given by
\begin{equation}
\label{eq:A-1N}
\forall p \in \RR^d, \quad
A^\star_{1,k,N} \, p = 
\frac{1}{|Q_N|} \int_{Q_N} A_1^k \left(p + \nabla w_p^{1,k,N} \right).
\end{equation}
With probability $\eta^2 (1-\eta)^{|Q_N|-2}$, there are two defects in $Q_N$, located, say, in the cells $k+Q$ and $l+Q$ (see Fig.~\ref{fig:1-2-defects}). Let us define
\begin{equation}
\label{eq:A2kl}
A_2^{k,l} = A_{\rm per} + \Big( 1_{k+Q} + 1_{l+Q} \Big) \, \Big( C_{\rm per} - A_{\rm per} \Big),
\end{equation}
the associated corrector $w_p^{2,k,l,N}$, solution to
\begin{equation} 
\label{eq:correcteur-2N}
-\dive \left(A_2^{k,l} \left( p +  \nabla w_p^{2,k,l,N}\right) \right) = 0,
\qquad
w_p^{2,k,l,N} \ \mbox{is $Q_N$-periodic},
\end{equation}
and the homogenized matrix $A^\star_{2,k,l,N}$, given by
\begin{equation}
\label{eq:A-2N}
\forall p \in \RR^d, \quad
A^\star_{2,k,l,N} \, p = 
\frac{1}{|Q_N|} \int_{Q_N} A_2^{k,l} \left(p + \nabla w_p^{2,k,l,N} \right).
\end{equation}
All the other configurations (with three defects or more) have a smaller probability.

\begin{figure}[htbp]
\centering{
\includegraphics[width=4cm]{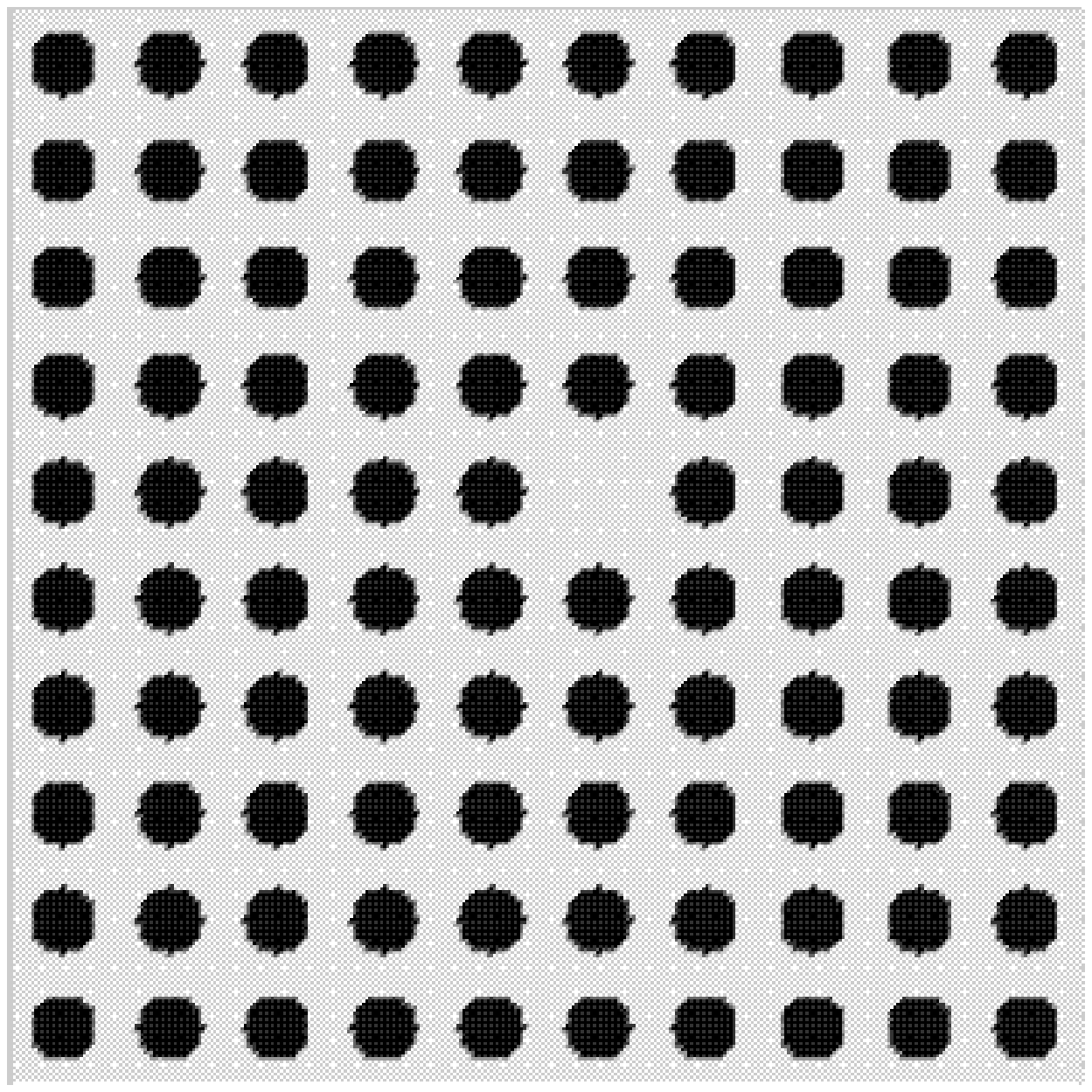}
\qquad
\includegraphics[width=4cm]{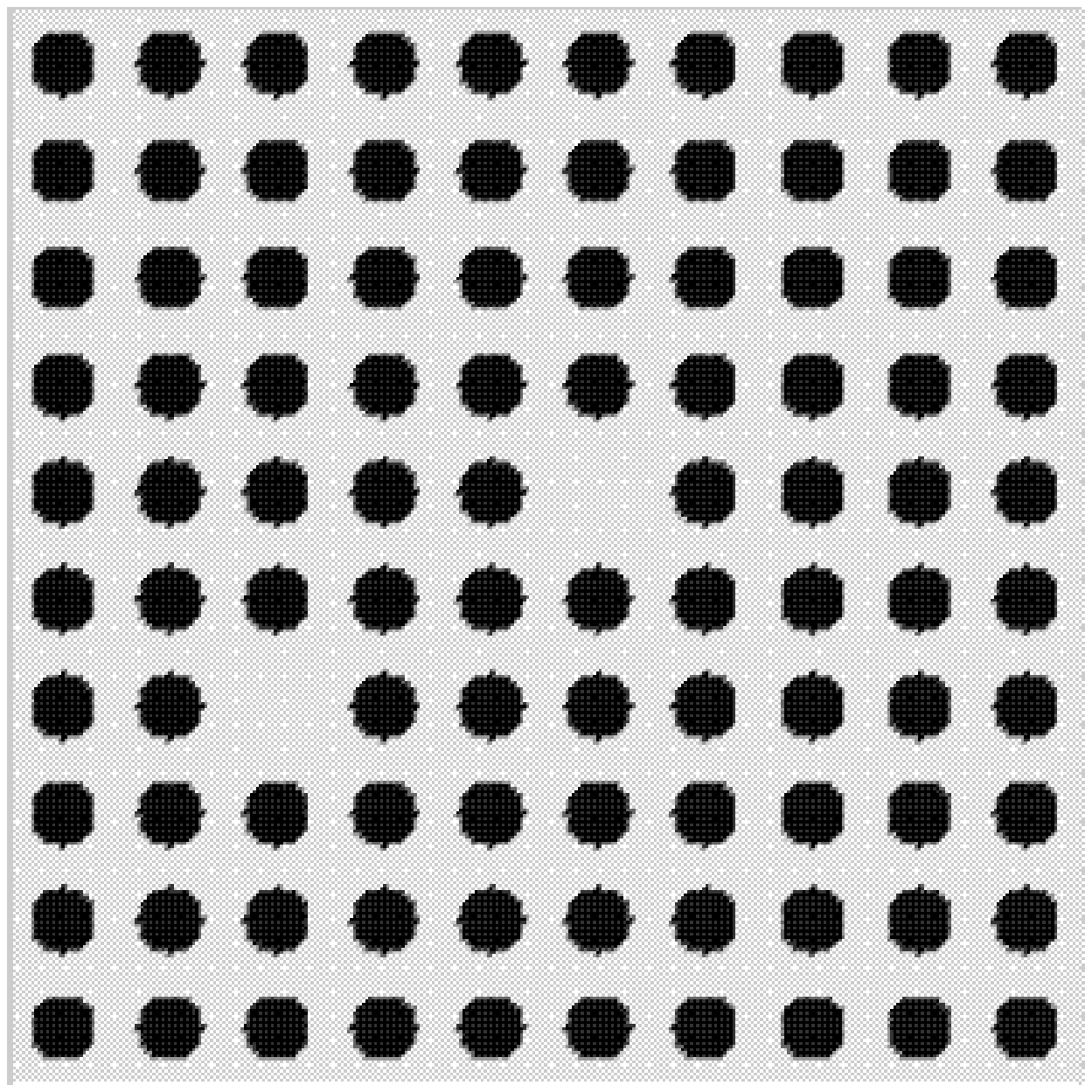}
}
\caption{Left: material modelled by $A_1^k$, with a single defect. Right: material modelled by $A_2^{k,l}$, with two defects (Courtesy A. Anantharaman and C. Le Bris).
\label{fig:1-2-defects}
}
\end{figure}

\medskip

Let us define
$$
\II_N := \left\{ k \in \ZZ^d; \ Q+k \subset Q_N \right\}.
$$
As shown in~\cite{ALB_mms}, we then have the following result:
\begin{proposition}[\cite{ALB_mms}, Section 3.2]
\label{weakly-random}
Let $A^\star_{\eta,N}(\omega)$ be the apparent homogenized matrix defined by~\eqref{eq:A-N}, where $A \equiv A_\eta$ is given by~\eqref{eq:weak-random}--\eqref{eq:random-form1}--\eqref{eq:ber}. Then
\begin{equation}
\label{eq:dl-weakly-random}
\EE \left[ A^\star_{\eta,N} \right] = A^\star_{\rm per} + \eta \overline{A}^N_1 + \eta^2 \overline{A}^N_2 + O_N(\eta^3),
\end{equation}
where $O_N(\eta^3)$ is a quantity of the order of $\eta^3$ with a prefactor that may depend on $N$, $A^\star_{\rm per}$ is given by~\eqref{eq:A-per} and 
\begin{eqnarray*}
\overline{A}^N_1 
&=& 
\sum_{k \in \II_N} \Big( A^\star_{1,k,N} - A^\star_{\rm per} \Big),
\\
\overline{A}^N_2 
&=& 
\frac12 \sum_{k, l \in \II_N, k \neq l} \Big( A^\star_{2,k,l,N} - A^\star_{1,k,N} - A^\star_{1,l,N} + A^\star_{\rm per} \Big).
\end{eqnarray*}
\end{proposition}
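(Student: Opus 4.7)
The proof will proceed by explicit conditioning on the random configuration of defects. Because the $B_k^\eta$ are Bernoulli$(\eta)$ and independent, the realisation of $A_\eta$ on $Q_N$ is entirely determined by the finite random subset
\[
D(\omega) = \bigl\{ k \in \II_N : B_k^\eta(\omega) = 1 \bigr\},
\]
and by independence one has $\PP\bigl(D=S\bigr) = \eta^{|S|}(1-\eta)^{|\II_N|-|S|}$ for every $S \subseteq \II_N$. For each such $S$, conditionally on $\{D=S\}$ the field is the deterministic matrix $A_S := A_{\rm per} + \mathbf{1}_{\cup_{k\in S}(k+Q)}(C_{\rm per}-A_{\rm per})$; denote by $A^\star_{S,N}$ the corresponding deterministic apparent homogenised matrix defined by~\eqref{eq:A-N}--\eqref{eq:correcteur-random-N}. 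With this notation,
\[
\EE\bigl[A^\star_{\eta,N}\bigr] \;=\; \sum_{S\subseteq \II_N} \eta^{|S|}(1-\eta)^{|\II_N|-|S|}\,A^\star_{S,N},
\]
which is a polynomial in $\eta$ of degree $|\II_N|$. Note also that $A^\star_{\emptyset,N} = A^\star_{\rm per}$: since $A_{\rm per}$ is $\ZZ^d$-periodic and $Q_N$ has integer side length, the periodic corrector of~\eqref{eq:correcteur-per} is admissible in~\eqref{eq:correcteur-random-N}, and by uniqueness gives $A^\star_{\emptyset,N}=A^\star_{\rm per}$. Similarly $A^\star_{\{k\},N}=A^\star_{1,k,N}$ and $A^\star_{\{k,l\},N}=A^\star_{2,k,l,N}$.

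Next, I would expand $(1-\eta)^{|\II_N|-|S|}$ by the binomial formula and collect powers of $\eta$. Only $|S|\in\{0,1,2\}$ contribute to the coefficients of $\eta^0,\eta^1,\eta^2$; setting $M_N := |\II_N|$ one finds
\begin{align*}
\text{coeff. }\eta^0 &: \quad A^\star_{\rm per},\\
\text{coeff. }\eta^1 &: \quad -M_N\,A^\star_{\rm per} + \sum_{k\in\II_N} A^\star_{1,k,N} = \overline{A}_1^N,\\
\text{coeff. }\eta^2 &: \quad \binom{M_N}{2} A^\star_{\rm per} - (M_N-1)\sum_{k\in\II_N}A^\star_{1,k,N} + \tfrac{1}{2}\!\!\sum_{k\neq l,\,k,l\in\II_N}\!\! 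A^\star_{2,k,l,N},
\end{align*}
and a short bookkeeping check (using $\sum_{k\neq l}A^\star_{1,k,N} = (M_N-1)\sum_k A^\star_{1,k,N}$ by symmetry in $k\leftrightarrow l$, and $\sum_{k\neq l}1 = 2\binom{M_N}{2}$) shows that the $\eta^2$-coefficient equals $\overline{A}_2^N$ as defined in the statement.

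It then remains to control the remainder. All terms with $|S|\ge 3$ are of order $\eta^3$, and the Taylor remainders coming from $|S|=0,1,2$ are of order $\eta^3$ as well. Since the ellipticity and boundedness assumption~\eqref{eq:elliptic} yields a deterministic $L^\infty$ bound on each $A^\star_{S,N}$ uniform in $S$, one may bound the remainder by $C_N\,\eta^3$ for some constant $C_N$ depending only on $M_N = |\II_N|$ and on the ellipticity constants; this is precisely the $O_N(\eta^3)$ appearing in~\eqref{eq:dl-weakly-random}. The main, and only, real difficulty is the combinatorial bookkeeping in the $\eta^2$-coefficient: one must be careful to rewrite $\binom{M_N}{2}A^\star_{\rm per}$ and the $\sum_{k}A^\star_{1,k,N}$ terms as ``diagonal-free'' double sums over $k\neq l$ in order to recognise the structure $A^\star_{2,k,l,N} - A^\star_{1,k,N} - A^\star_{1,l,N} + A^\star_{\rm per}$ displayed in $\overline{A}_2^N$; everything else reduces to Bernoulli arithmetic and an application of~\eqref{eq:elliptic}.
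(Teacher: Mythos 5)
Your proof is correct, and it follows essentially the same route the paper relies on: the paper only cites this result from~\cite{ALB_mms}, but the technique it indicates (and uses explicitly in the proof of Lemma~\ref{lem:approx_loi}, which it says ``follows the same lines'') is exactly your conditioning on the defect configuration and enumeration by number of defects, with the $O_N(\eta^3)$ remainder absorbed via the uniform bound from~\eqref{eq:elliptic}. Your combinatorial bookkeeping for the $\eta^2$-coefficient checks out.
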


We note that
\begin{equation}
\label{eq:decompo}
\overline{A}^N_1  
= 
\sum_{k \in \II_N} \overline{A}^{k,N}_{\rm 1 \, def},
\quad \text{and} \quad
\overline{A}^N_2  
=
\frac12 \sum_{k \in \II_N} \sum_{l \in \II_N, l \neq k} \overline{A}^{k,l,N}_{\rm 2 \, def},
\end{equation}
where $\overline{A}^{k,N}_{\rm 1 \, def}$ (resp. $\overline{A}^{k,l,N}_{\rm 2 \, def}$) is the marginal contribution to the homogenized matrix from a configuration with a single defect in $k+Q$ (resp. two defects in $k+Q$ and $l+Q$):
\begin{eqnarray}
\label{eq:marg1}
\overline{A}^{k,N}_{\rm 1 \, def}
&=& 
A^\star_{1,k,N} - A^\star_{\rm per},
\\
\label{eq:marg2}
\overline{A}^{k,l,N}_{\rm 2 \, def}
&=& 
A^\star_{2,k,l,N} - A^\star_{1,k,N} - A^\star_{1,l,N} + A^\star_{\rm per}.
\end{eqnarray}

\begin{remark}
Passing to the limit $N \to \infty$ in~\eqref{eq:dl-weakly-random} is not easy. We refer to~\cite[Section 3.2]{ALB_mms} and~\cite{M13}.
\end{remark}

When $\eta$ is small, the advantage of~\eqref{eq:dl-weakly-random} over the approach recalled in Section~\ref{sec:bp} is evident. Rather than solving the random problem~\eqref{eq:correcteur-random-N} (for several realizations of $A_\eta$), it is enough to solve the deterministic problems~\eqref{eq:correcteur-per},~\eqref{eq:correcteur-1N} and~\eqref{eq:correcteur-2N} to infer an accurate approximation of $\EE \left[ A^\star_{\eta,N} \right]$. We refer to~\cite{ALB_mms} for illustrative numerical results. 

Furthermore, due to periodic boundary conditions~\eqref{eq:correcteur-1N}, that are reminiscent of the periodic boundary conditions in~\eqref{eq:correcteur-random-N}, we have that
\begin{equation}
\label{eq:thanks_PBC}
\text{$A^\star_{1,k,N}$ does not depend on $k$.} 
\end{equation}
Likewise, $A^\star_{2,k,l,N}$ depends only on $k-l$. Thus, there is only {\em one} problem~\eqref{eq:correcteur-1N} to be solved (say for $k=0$). Likewise, there are $|\II_N|-1$ problems~\eqref{eq:correcteur-2N} to be solved (say for $k=0$ and $l \neq 0$), and not $|\II_N| \, (|\II_N|-1$). Noticing that~\eqref{eq:correcteur-2N} is a problem parameterized by $l$, the authors of~\cite{clb_thomines} have shown how to use a Reduced Basis approach to further speed-up the computation of $\overline{A}^N_2$. In practice, one can still obtain a good approximation of $\overline{A}^N_2$ without solving all the $|\II_N|-1$ problems~\eqref{eq:correcteur-2N}. We return to this specific question in Section~\ref{sec:num_RB}.

\section{Control variate approaches for stochastic homogenization}
\label{sec:CV}

We now introduce, for the model~\eqref{eq:weak-random}--\eqref{eq:random-form1}--\eqref{eq:ber}, a control variate approach. Our aim is now to address the regime when $\eta$ is not close to 0 or 1 (the approximation~\eqref{eq:dl-weakly-random} is therefore not accurate enough). Recall also that, in view of the discussion at the end of Section~\ref{sec:CV_101}, we need a {\em random} surrogate model to build our controlled variable. In what follows, we first build an approximate model based on configurations with a single defect (see Section~\ref{sec:order-1}), and next turn to building a better approximate model that also uses configurations with two defects (see Section~\ref{sec:order-2}). As will be seen below, this second approximate model not only depends on the quantity of defects, but also on their geometry, that is on where the defects are located in $Q_N$.

\subsection{A first-order model}
\label{sec:order-1}

Introduce
\begin{equation}
\label{eq:Y1}
A^{\eta,N}_1(\omega)  
= 
\sum_{k \in \II_N} B_k^\eta(\omega) \, \overline{A}^{k,N}_{\rm 1 \, def},
\end{equation}
where $\overline{A}^{k,N}_{\rm 1 \, def}$, defined by~\eqref{eq:marg1}, is the marginal contribution to the homogenized matrix coming the configuration with a single defect located in $k+Q$. In view of~\eqref{eq:decompo}, we notice that
$$
\EE \left[ A^{\eta,N}_1 \right]
=
\sum_{k \in \II_N} \EE \left[ B_k^\eta \right] \overline{A}^{k,N}_{\rm 1 \, def}
=
\eta \sum_{k \in \II_N} \overline{A}^{k,N}_{\rm 1 \, def}
=
\eta \overline{A}^N_1,
$$
which is the first order correction in the expansion~\eqref{eq:dl-weakly-random}. When $\eta$ is small, the {\em expectation} of $A^\star_{\rm per} + A^{\eta,N}_1(\omega)$ is a good approximation of the expectation of $A^\star_{\eta,N}(\omega)$, accurate up to an error of the order of $\eta^2$. The following observation provides additional motivation for our choice~\eqref{eq:Y1}. It turns out that the {\em law} of the random variable $A^\star_{\rm per} + A^{\eta,N}_1(\omega)$ is a good approximation of that of $A^\star_{\eta,N}(\omega)$:

\begin{lemma}
\label{lem:approx_loi}
For any deterministic and continuous function $\varphi$, we have
$$
\EE \left[ \varphi \left( A^\star_{\eta,N} \right) \right]
=
\EE \left[ \varphi \left( A^\star_{\rm per} + A^{\eta,N}_1 \right) \right] + O_N(\eta^2).
$$
\end{lemma}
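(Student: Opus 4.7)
The strategy is to decompose both expectations into finite sums over defect configurations and compare them term by term. Since the restriction of $A_\eta$ to $Q_N$ depends only on the finite family $(B_k^\eta)_{k \in \II_N}$ of independent Bernoulli variables, both matrices $A^\star_{\eta,N}$ and $A^\star_{\rm per} + A^{\eta,N}_1$ are measurable functions of the random subset
$$
S(\omega) := \{k \in \II_N : B_k^\eta(\omega) = 1\} \subset \II_N,
$$
which takes each value $T \subset \II_N$ with probability $\eta^{|T|}(1-\eta)^{|\II_N|-|T|}$. In particular, each of these two random matrices takes at most $2^{|\II_N|}$ deterministic values, so both expectations are finite sums indexed by $T \subset \II_N$.

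The key observation is that the configurations with at most one defect contribute identically. On $\{S=\emptyset\}$ both matrices equal $A^\star_{\rm per}$. On $\{S=\{k\}\}$, the first matrix is $A^\star_{1,k,N}$ by the definition~\eqref{eq:correcteur-1N}--\eqref{eq:A-1N}, while the second equals $A^\star_{\rm per} + \overline{A}^{k,N}_{\rm 1\,def}$, which is precisely $A^\star_{1,k,N}$ by~\eqref{eq:marg1}. Consequently the $|T| \leq 1$ contributions cancel exactly in the difference $\EE[\varphi(A^\star_{\eta,N})] - \EE[\varphi(A^\star_{\rm per} + A^{\eta,N}_1)]$.

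To control the remaining sum over $|T| \geq 2$, I would use that both random matrices take values in a bounded subset of $\RR^{d \times d}$ whose diameter depends only on $N$, $A_{\rm per}$ and $C_{\rm per}$ (via~\eqref{eq:elliptic} for $A^\star_{\eta,N}$, and via the explicit formula $A^\star_{\rm per} + \sum_{k \in T}(A^\star_{1,k,N} - A^\star_{\rm per})$ for the controlled variable). Continuity of $\varphi$ then yields $|\varphi| \leq M_{N,\varphi}$ on this set, and $\eta^{|T|} \leq \eta^2$ for $|T| \geq 2$ gives
$$
\Bigl| \EE[\varphi(A^\star_{\eta,N})] - \EE[\varphi(A^\star_{\rm per} + A^{\eta,N}_1)] \Bigr|
\leq 2 M_{N,\varphi} \sum_{k=2}^{|\II_N|} \binom{|\II_N|}{k} \eta^k (1-\eta)^{|\II_N|-k}
\leq 2^{|\II_N|+1} M_{N,\varphi}\, \eta^2,
$$
which is the claimed $O_N(\eta^2)$. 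The proof is essentially bookkeeping; the only mildly delicate point is identifying why the single-defect contributions agree, which is tautological once~\eqref{eq:marg1} is unfolded. The $N$-dependent prefactor degrades exponentially in this crude argument, but this is harmless since the statement is an $\eta \to 0$ asymptotic at $N$ fixed.
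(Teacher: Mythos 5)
Your proposal is correct and follows essentially the same route as the paper: both enumerate the defect configurations on $Q_N$, observe that the zero-defect and one-defect terms coincide exactly (via $A^\star_{\rm per} + \overline{A}^{k,N}_{\rm 1\,def} = A^\star_{1,k,N}$), and absorb the configurations with two or more defects into the $O_N(\eta^2)$ remainder. Your version merely spells out the tail bound (boundedness of $\varphi$ on the finite range of values plus $\eta^{|T|}\leq\eta^2$) that the paper leaves implicit.
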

The proof of Lemma~\ref{lem:approx_loi} is postponed until Section~\ref{sec:theo_dD1}.

\medskip

We thus think that $A^\star_{\rm per} + A^{\eta,N}_1(\omega)$ is a good surrogate model for $A^\star_{\eta,N}(\omega)$. As shown by Lemma~\ref{lem:approx_loi}, this is the case when $\eta \ll 1$, which is however not the regime we address. One-dimensional computations presented in Section~\ref{sec:theo_1D} and numerical observations reported in Section~\ref{sec:num} (for two-dimensional test-cases) confirm that it is indeed the case, even when $\eta$ is not small. 

Following~\eqref{eq:control-variate-begin}, we now introduce our controlled variable as
\begin{eqnarray}
\nonumber
D_\rho^{1,\eta}(\omega) 
&=& 
A^\star_{\eta,N}(\omega) - \rho \left( A^\star_{\rm per} + A^{\eta,N}_1 (\omega) - \EE \left[ A^\star_{\rm per} + A^{\eta,N}_1 \right] \right)
\\
\label{eq:control-1}
&=&
A^\star_{\eta,N}(\omega) - \rho \left( A^{\eta,N}_1 (\omega) - \eta \overline{A}^N_1 \right).
\end{eqnarray}
In view of~\eqref{eq:Y1},~\eqref{eq:marg1} and~\eqref{eq:thanks_PBC}, we recast~\eqref{eq:control-1} as
\begin{equation}
\label{eq:control-1_bis}
D_\rho^{1,\eta}(\omega) 
=
A^\star_{\eta,N}(\omega) - \rho \left[ \left( \sum_{k \in \II_N} B_k^\eta(\omega) \right) - \eta \left| \II_N \right| \right] \overline{A}^{0,N}_{\rm 1 \, def}.
\end{equation} 

\begin{remark}
Note that, in~\eqref{eq:control-1_bis}, $A^\star_{\eta,N}(\omega)$ and $\sum_{k \in \II_N} B_k^\eta(\omega)$ are correlated. Indeed, in practice, we start by drawing a realization of the random variables $B^k_\eta(\omega)$ for all $k \in \II_N$. This determines first $\sum_{k \in \II_N} B_k^\eta(\omega)$, and second the field $A(x,\omega)$ on $Q_N$, from which we compute the associated $A^\star_{\eta,N}(\omega)$ following~\eqref{eq:A-N}--\eqref{eq:correcteur-random-N}. 
\end{remark}

Computing $M$ realizations of $D_\rho^{1,\eta}(\omega)$ therefore amounts to: 
\begin{itemize}
\item offline stage: determine $\overline{A}^{0,N}_{\rm 1 \, def}$ by solving the problem~\eqref{eq:correcteur-per}--\eqref{eq:A-per} on $Q$ and solving only once the problem~\eqref{eq:correcteur-1N}--\eqref{eq:A-1N} on $Q_N$ (say for $k=0$).
\item online stage: solve $M$ corrector problems~\eqref{eq:A-N}--\eqref{eq:correcteur-random-N} on $Q_N$ (for $M$ i.i.d. realizations of $A$ on $Q_N$), and evaluate $D_\rho^{1,\eta}(\omega)$ according to~\eqref{eq:control-1_bis}.
\end{itemize} 
Let ${\cal C}_N$ be the cost to solve a single corrector problem on $Q_N$. The Monte Carlo empirical estimator and the Control Variate empirical estimator, defined respectively by
$$
I^{\rm MC}_M = \frac{1}{M} \sum_{m=1}^M A^{\star,m}_{\eta,N}(\omega)
\quad 
\text{and}
\quad
I^{\rm CV}_M = \frac{1}{M} \sum_{m=1}^M D^{1,\eta,m}_\rho(\omega)
$$
therefore share the same cost ($M \, {\cal C}_N$ for the former, $(1+M) \, {\cal C}_N$ for the latter). To minimize the variance of $D^{1,\eta}_\rho$, the parameter $\rho$ in~\eqref{eq:control-1} is chosen following~\eqref{eq:rho_star}.

\bigskip

Notice that, in the above construction, we have considered as reference configuration the defect-free material, i.e. that for $\eta = 0$. Since, in the regime we focus on, $\eta$ is not small, there is no reason to favor the defect-free configuration ($\eta=0$) rather than the full defect configuration ($\eta=1$), which corresponds to the periodic matrix $C_{\rm per}$. We therefore introduce (compare with~\eqref{eq:marg1})
$$
\overline{C}^{k,N}_{\rm 1 \, def}
=
C^\star_{1,k,N} - C_{\rm per}^\star,
$$
where $C^\star_{1,k,N}$ is the homogenized matrix corresponding to a unique defect with respect to the periodic configuration $C_{\rm per}$ (compare with~\eqref{eq:A1k},~\eqref{eq:correcteur-1N} and~\eqref{eq:A-1N}):
\begin{equation}
\label{eq:D-1N}
\forall p \in \RR^d, \quad
C^\star_{1,k,N} \, p = 
\frac{1}{|Q_N|} \int_{Q_N} C_1^k \left(p + \nabla v_p^{1,k,N} \right),
\end{equation}
where, for any $p$, the corrector $v_p^{1,k,N}$ is a solution to
$$
-\dive \left(C_1^k \left( p +  \nabla v_p^{1,k,N}\right) \right) = 0,
\qquad
v_p^{1,k,N} \ \mbox{is $Q_N$-periodic},
$$
where $C_1^k = C_{\rm per} - 1_{k+Q} \Big( C_{\rm per} - A_{\rm per} \Big)$. In the spirit of~\eqref{eq:control-1_bis}, we introduce the controlled variable
$$
\widehat{D}_{\widehat{\rho}}^{1,\eta}(\omega) 
=
A^\star_{\eta,N}(\omega) - \widehat{\rho} \left[ \left( \sum_{k \in \II_N} (1-B_k^\eta(\omega)) \right) - (1-\eta) \left| \II_N \right| \right] \overline{C}^{0,N}_{\rm 1 \, def} ,
$$
that we recast as
$$
\widehat{D}_{\widehat{\rho}}^{1,\eta}(\omega) 
=
A^\star_{\eta,N}(\omega) + \widehat{\rho} \left[ \left( \sum_{k \in \II_N} B_k^\eta(\omega) \right) - \eta \left| \II_N \right| \right] \overline{C}^{0,N}_{\rm 1 \, def}.
$$
Consider now any entry $1 \leq i,j \leq d$ of the homogenized matrix. 
Assuming that our control variate model is non trivial (i.e. that $\left[ \overline{A}^{0,N}_{\rm 1 \, def} \right]_{ij} \neq 0$), we see that, for any deterministic $\widehat{\rho}$, there exists a deterministic parameter $\rho$ such that $\dis \left[ \widehat{D}_{\widehat{\rho}}^{1,\eta}(\omega) \right]_{ij} 
=
\left[ D_\rho^{1,\eta}(\omega) \right]_{ij}$ a.s. 
Working with the controlled variable $D_\rho^{1,\eta}(\omega)$ is hence equivalent to working with the controlled variable $\widehat{D}_{\widehat{\rho}}^{1,\eta}(\omega)$.  In the sequel, we only consider the former. 

\begin{remark}
The situation is different in the second order model, where taking $A_{\rm per}$ or $C_{\rm per}$ as reference is not equivalent. See Section~\ref{sec:order-2} below.
\end{remark}

\begin{remark}
In view of~\eqref{eq:control-1_bis}, we see that our first order control variable only depends on $\dis \sum_{k \in \II_N} B_k^\eta(\omega)$, which is the number of defects in the material. This approach can thus be extended to any two-phase materials, say of the type $A(x,\omega) = A_1 + \chi(x,\omega) A_2$, where $\chi$ is stationary and equal to 0 or 1. In this case, the control variable reads $\dis \int_{Q_N} \chi(x,\omega) \, dx$. We refer to~\cite{mb-fl} for works in that direction.
\end{remark}

\subsection{A second-order model}
\label{sec:order-2}

We now introduce a model that not only takes into account the contributions from single defects (through $\overline{A}^{k,N}_{\rm 1 \, def}$, see~\eqref{eq:Y1}) but also contributions from pairs of defects. To that aim, we introduce
\begin{equation}
\label{eq:Y2}
A^{\eta,N}_2(\omega)  
= 
\frac12 \sum_{k \in \II_N} \sum_{l \in \II_N, \, l \neq k} B_k^\eta(\omega) \, B_l^\eta(\omega) \, \overline{A}^{k,l,N}_{\rm 2 \, def},
\end{equation}
where $\overline{A}^{k,l,N}_{\rm 2 \, def}$, defined by~\eqref{eq:marg2}, is the marginal contribution to the homogenized matrix associated to the configuration with two defects located in $k+Q$ and $l+Q$. In view of~\eqref{eq:decompo}, we notice that
$$
\EE \left[ A^{\eta,N}_2 \right]
=
\frac12 \sum_{k \in \II_N} \sum_{l \in \II_N, \, l \neq k} \EE \left[ B_k^\eta \, B_l^\eta \right] \overline{A}^{k,l,N}_{\rm 2 \, def}
=
\frac{\eta^2}{2} \sum_{k \in \II_N} \sum_{l \in \II_N, \, l \neq k} \overline{A}^{k,l,N}_{\rm 2 \, def}
=
\eta^2 \overline{A}^N_2,
$$
which is the second order correction in the expansion~\eqref{eq:dl-weakly-random}. When $\eta$ is small, the {\em expectation} of $A^\star_{\rm per} + A^{\eta,N}_1(\omega) + A^{\eta,N}_2(\omega)$ is a good approximation of the expectation of $A^\star_{\eta,N}(\omega)$, accurate up to an error of the order of $\eta^3$. Furthermore, we have the following result (compare with Lemma~\ref{lem:approx_loi}), the proof of which follows the same lines as that of Lemma~\ref{lem:approx_loi} and is therefore omitted:

\begin{lemma}
\label{lem:approx_loi2}
For any deterministic and continuous function $\varphi$, we have
$$
\EE \left[ \varphi \left( A^\star_{\eta,N} \right) \right]
=
\EE \left[ \varphi \left( A^\star_{\rm per} + A^{\eta,N}_1 + A^{\eta,N}_2 \right) \right] + O_N(\eta^3).
$$
\end{lemma}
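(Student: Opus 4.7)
The plan is to mimic the proof of Lemma~\ref{lem:approx_loi}: partition the sample space according to the number of defects $K(\omega) = \sum_{k \in \II_N} B_k^\eta(\omega)$, verify that the two random matrices $A^\star_{\eta,N}$ and $A^\star_{\rm per} + A^{\eta,N}_1 + A^{\eta,N}_2$ \emph{coincide pathwise} on configurations with $K(\omega) \leq 2$, and then observe that the remaining configurations carry total probability $O_N(\eta^3)$.

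The pathwise matching for $K \leq 2$ is a direct algebraic check. If $K(\omega)=0$, both variables equal $A^\star_{\rm per}$ since $A^{\eta,N}_1 = A^{\eta,N}_2 = 0$. If $K(\omega) = 1$ with the defect located in cell $k+Q$, then the realized field of $A_\eta$ is exactly $A_1^k$, so $A^\star_{\eta,N}(\omega) = A^\star_{1,k,N}$; on the other hand $A^{\eta,N}_1(\omega) = \overline{A}^{k,N}_{\rm 1\,def} = A^\star_{1,k,N} - A^\star_{\rm per}$ and $A^{\eta,N}_2(\omega)=0$ (only one Bernoulli equals one), so the surrogate also equals $A^\star_{1,k,N}$. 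If $K(\omega) = 2$ with defects in $k+Q$ and $l+Q$, the realized field is $A_2^{k,l}$, so $A^\star_{\eta,N}(\omega) = A^\star_{2,k,l,N}$; the surrogate reads
$$
A^\star_{\rm per} + \overline{A}^{k,N}_{\rm 1\,def} + \overline{A}^{l,N}_{\rm 1\,def} + \overline{A}^{k,l,N}_{\rm 2\,def},
$$
and substituting the definitions~\eqref{eq:marg1}--\eqref{eq:marg2} produces a telescoping sum which reduces exactly to $A^\star_{2,k,l,N}$. This is the whole purpose of the second-order marginal.

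Writing $Z(\omega) := A^\star_{\rm per} + A^{\eta,N}_1(\omega) + A^{\eta,N}_2(\omega)$, the above yields
$$
\EE\!\left[\varphi(A^\star_{\eta,N})\right] - \EE\!\left[\varphi(Z)\right]
=
\EE\!\left[\bigl(\varphi(A^\star_{\eta,N}) - \varphi(Z)\bigr) \mathbf{1}_{K \geq 3}\right].
$$
For fixed $N$, both $A^\star_{\eta,N}(\omega)$ and $Z(\omega)$ take values in a \emph{finite} subset of matrices (indexed by the $2^{|\II_N|}$ possible defect configurations), so by continuity $\varphi$ is bounded on this finite set by some constant $M_N$ independent of $\eta$. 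Consequently
$$
\left| \EE\!\left[\varphi(A^\star_{\eta,N})\right] - \EE\!\left[\varphi(Z)\right] \right|
\leq
2 M_N \, \PP(K \geq 3)
\leq
2 M_N \sum_{j=3}^{|\II_N|} \binom{|\II_N|}{j} \eta^j
\leq
C_N \, \eta^3,
$$
with $C_N$ depending only on $N$, which is the desired $O_N(\eta^3)$ bound.

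The argument is essentially bookkeeping; the only nontrivial step is the algebraic cancellation in the two-defect case, and it is precisely engineered into the definition~\eqref{eq:marg2}. No serious analytic obstacle arises because $N$ is fixed (so the state space of $A^\star_{\eta,N}$ is finite) and the Bernoulli tail estimate for $K \geq 3$ is elementary.
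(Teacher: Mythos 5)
Your proof is correct and follows essentially the same route as the paper: the paper omits the proof of this lemma, stating that it follows the same lines as that of Lemma~\ref{lem:approx_loi}, which is precisely the enumeration of configurations by number of defects that you carry out (with the pathwise coincidence on $\{K\leq 2\}$ replacing the paper's explicit term-by-term identification, and the telescoping of~\eqref{eq:marg1}--\eqref{eq:marg2} checked correctly). Your explicit justification of the $O_N(\eta^3)$ remainder via boundedness of $\varphi$ on the finite range and the Bernoulli tail bound is a detail the paper leaves implicit, but it is the intended argument.
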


In a way similar to~\eqref{eq:control-1}, we now introduce our second-order controlled variable as
\begin{equation}
\label{eq:control-2}
D_{\rho_1,\rho_2}^{2,\eta}(\omega) 
= 
A^\star_{\eta,N}(\omega) - \rho_1 \left( A^{\eta,N}_1 (\omega) - \eta \overline{A}^N_1 \right) - \rho_2 \left( A^{\eta,N}_2 (\omega) - \eta^2 \overline{A}^N_2 \right).
\end{equation}
We have introduced two deterministic parameters $\rho_1$ and $\rho_2$, which need not be equal. For any choice of these parameters, we have $\EE \left[ D_{\rho_1,\rho_2}^{2,\eta} \right] = \EE \left[ A^\star_{\eta,N} \right]$. 

To evaluate~\eqref{eq:control-2}, we first have to precompute the deterministic matrices
$$
\overline{A}^{k,N}_{\rm 1 \, def} = \overline{A}^{0,N}_{\rm 1 \, def}
\quad \text{and} \quad
\overline{A}^{k,l,N}_{\rm 2 \, def} = \overline{A}^{0,l-k,N}_{\rm 2 \, def}.
$$
Computing $M$ realizations of $D_{\rho_1,\rho_2}^{2,\eta}(\omega)$ therefore amounts to:
\begin{itemize}
\item offline stage: (i) determine $\overline{A}^{0,N}_{\rm 1 \, def}$ by solving the problem~\eqref{eq:correcteur-per}--\eqref{eq:A-per} on $Q$ and by solving only once the problem~\eqref{eq:correcteur-1N}--\eqref{eq:A-1N} on $Q_N$ (say for $k=0$); (ii) determine $\overline{A}^{0,l,N}_{\rm 2 \, def}$ by solving $\left| \II_N \right|-1$ problems~\eqref{eq:correcteur-2N}--\eqref{eq:A-2N} on $Q_N$ (for $k=0$ and $l \in \II_N$, $l \neq 0$).
\item online stage: solve $M$ corrector problems~\eqref{eq:A-N}--\eqref{eq:correcteur-random-N} on $Q_N$ (for $M$ i.i.d. realizations of $A$ on $Q_N$), and evaluate $D_{\rho_1,\rho_2}^{2,\eta}(\omega)$ according to~\eqref{eq:control-2}.
\end{itemize} 
Questions related to the cost for evaluating $\overline{A}^{0,l,N}_{\rm 2 \, def}$ are discussed at the end of this section. 

\medskip

As pointed out in Section~\ref{sec:order-1}, in our regime of interest, there is no reason to favor the defect-free configuration rather than the full defect configuration, which corresponds to the periodic matrix $C_{\rm per}$. We have shown there that there is no use to introduce the terms representing the first order correction with respect to $C_{\rm per}$. We therefore solely introduce the second order correction (compare with~\eqref{eq:marg2}):
\begin{equation}
\label{eq:marg2_bis}
\overline{C}^{k,l,N}_{\rm 2 \, def}
=
C^\star_{2,k,l,N} - C^\star_{1,k,N} - C^\star_{1,l,N} + C_{\rm per}^\star,
\end{equation}
where $C^\star_{1,k,N}$ is defined by~\eqref{eq:D-1N} and $C^\star_{2,k,l,N}$ is defined by (compare with~\eqref{eq:A2kl}, \eqref{eq:correcteur-2N} and~\eqref{eq:A-2N}):
\begin{equation}
\label{eq:D-2N}
\forall p \in \RR^d, \quad
C^\star_{2,k,l,N} \, p = 
\frac{1}{|Q_N|} \int_{Q_N} C_2^{k,l} \left(p + \nabla v_p^{2,k,l,N} \right),
\end{equation}
where, for any $p \in \RR^d$, the corrector $v_p^{2,k,l,N}$ is a solution to
$$
-\dive \left(C_2^{k,l} \left( p +  \nabla v_p^{2,k,l,N}\right) \right) = 0,
\qquad
v_p^{2,k,l,N} \ \mbox{is $Q_N$-periodic},
$$
where $C_2^{k,l} = C_{\rm per} - \Big( 1_{k+Q} + 1_{l+Q} \Big) \, \Big( C_{\rm per} - A_{\rm per} \Big)$. As in~\eqref{eq:Y2}, we introduce
\begin{equation}
\label{eq:Y3}
C^{\eta,N}_2(\omega)  
= 
\frac12 \sum_{k \in \II_N} \sum_{l \in \II_N, \, l \neq k} \Big(1-B_k^\eta(\omega) \Big) \, \Big(1- B_l^\eta(\omega) \Big) \, \overline{C}^{k,l,N}_{\rm 2 \, def},
\end{equation}
where $\overline{C}^{k,l,N}_{\rm 2 \, def}$ is defined by~\eqref{eq:marg2_bis}, and its expectation reads
\begin{eqnarray*}
\overline{C}^{\eta,N}_2
:=
\EE \left[ C^{\eta,N}_2 \right]
&=& 
\frac12 \sum_{k \in \II_N} \sum_{l \in \II_N, \, l \neq k} \EE \left[ (1-B_k^\eta) \ (1- B_l^\eta) \right] \, \overline{C}^{k,l,N}_{\rm 2 \, def}
\\
&=& 
\frac12 \sum_{k \in \II_N} \sum_{l \in \II_N, \, l \neq k} (1-\eta)^2 \, \overline{C}^{k,l,N}_{\rm 2 \, def}.
\end{eqnarray*}
We eventually introduce the controlled variable (compare with~\eqref{eq:control-2})
\begin{multline}
\label{eq:control-3}
D_{\rho_1,\rho_2,\rho_3}^{3,\eta}(\omega) 
= 
A^\star_{\eta,N}(\omega) - \rho_1 \left( A^{\eta,N}_1 (\omega) - \eta \overline{A}^N_1 \right) 
\\
- \rho_2 \left( A^{\eta,N}_2 (\omega) - \eta^2 \overline{A}^N_2 \right) 
- \rho_3 \left( C^{\eta,N}_2 (\omega) - \overline{C}^{\eta,N}_2\right).
\end{multline}

Consider now a specific entry $1 \leq i,j \leq d$ of the homogenized matrix. The control variate approach consists in approximating $\EE \left[ \left( A^\star_{\eta,N} \right)_{ij} \right]$ by considering a Monte Carlo estimator for $\EE \left[ \left( D_{\rho_1,\rho_2,\rho_3}^{3,\eta} \right)_{ij} \right]$. The deterministic parameters $\rho_1$, $\rho_2$ and $\rho_3$ are chosen to minimize the variance of $\left( D_{\rho_1,\rho_2,\rho_3}^{3,\eta} (\omega) \right)_{ij}$. They are thus the solution of the following $3 \times 3$ linear system (we drop the subscript $i,j$ for conciseness): 
\begin{equation}
\label{eq:system_rho}
\begin{array}{rcl}
\Var[A^{\eta,N}_1] \rho_1 +  \Cov[A^{\eta,N}_1, A^{\eta,N}_2] \rho_2 + \Cov[A^{\eta,N}_1, C^{\eta,N}_2] \rho_3 
& = &
\Cov[A^\star_{\eta,N}, A^{\eta,N}_1]
\\
\Cov[A^{\eta,N}_2, A^{\eta,N}_1] \rho_1  + \Var[A^{\eta,N}_2] \rho_2 + \Cov[A^{\eta,N}_2, C^{\eta,N}_2] \rho_3 
& = &
\Cov[A^\star_{\eta,N}, A^{\eta,N}_2]
\\
\Cov[C^{\eta,N}_2, A^{\eta,N}_1] \rho_1  + \Cov[C^{\eta,N}_2, A^{\eta,N}_2] \rho_2 + \Var[C^{\eta,N}_2] \rho_3 
& = &
\Cov[A^\star_{\eta,N}, C^{\eta,N}_2] 
\end{array}
\end{equation}
depending on the covariances between the entries $ij$ of $A^\star_{\eta,N}$, $A^{\eta,N}_1$, $A^{\eta,N}_2$ and $C^{\eta,N}_2$. In practice, these covariances are approximated by empirical estimators (see Remark~\ref{rem:choix_rho}). 

\medskip

In practice, computing the matrices $\overline{A}^{0,l,N}_{\rm 2 \, def}$ (and likewise $\overline{C}^{0,l,N}_{\rm 2 \, def}$) is rather expensive (because each problem is set on the large domain $Q_N$, and the number of these problems increases when $N$ increases). It is therefore useful to approximate them using the Reduced Basis strategy introduced in~\cite{clb_thomines}, which dramatically decreases the computational cost. The procedure is essentially as follows. We first solve the single defect problem~\eqref{eq:correcteur-1N} for $k=0$, and solve~\eqref{eq:correcteur-2N} for a limited number of locations of the defect pairs, say $k=0$ and $l$ close to $k$. On the basis of these computations, we are then in position to obtain very efficient approximations of the matrices $\overline{A}^{0,l,N}_{2 \ \rm def}$ for all $l \in \II_N$, $l \neq 0$. Evaluating~\eqref{eq:Y2} is thus inexpensive. Thus, up to a limited offline cost (i.e. the cost for solving the few problems~\eqref{eq:correcteur-2N} that we have to consider), the Monte Carlo empirical estimator and the Control Variate empirical estimator, defined respectively by
$$
I^{\rm MC}_M = \frac{1}{M} \sum_{m=1}^M A^{\star,m}_{\eta,N}(\omega)
\quad
\text{and}
\quad
I^{\rm CV}_M := \frac{1}{M} \sum_{m=1}^M D_{\rho_1,\rho_2,\rho_3}^{3,\eta,m}(\omega) 
$$
share the same cost. We refer to Section~\ref{sec:num_RB} for numerical experiments using this procedure.

\begin{remark}
In sharp contrast to the first order control variable, the second order control variable not only depends on the number of defects in the materials, i.e. $\dis \sum_{k \in \II_N} B_k^\eta(\omega)$, but also on their location. The specific geometry of the materials, which is ignored in~\eqref{eq:control-1_bis}, is taken into account in~\eqref{eq:control-3}.
\end{remark}

\section{Elements of theoretical analysis}
\label{sec:theo}

This section is devoted to establishing estimates on the gain provided by our approach. We proceed in two directions. First, in Section~\ref{sec:theo_1D}, we consider the one-dimensional case. Our main results are Propositions~\ref{prop:theo_1D} and~\ref{prop:theo_1D_suite}. We consider the large $N$ regime, and estimate the variance (in terms of $N$) of $A^\star_{\eta,N}$, the controlled variables $D_\rho^{1,\eta}$ defined by~\eqref{eq:control-1} and $D_{\rho_1,\rho_2,\rho_3}^{3,\eta}$ defined by~\eqref{eq:control-3}. We show that they are of the order of $N^{-1}$, $N^{-2}$ and $N^{-3}$, respectively. Note that, in this section, we do not assume $\eta$ to be close to 0 or 1, i.e. we are in a fully random case. 

In Section~\ref{sec:theo_dD}, we turn to the multi-dimensional case. Our main result is Lemma~\ref{lem:variance-scaling}. We consider the regime when $\eta$ is small, and estimate the variance (in terms of $\eta$) of $A^\star_{\eta,N}$ and of the controlled variables $D_\rho^{1,\eta}$ defined by~\eqref{eq:control-1} and $D_{\rho_1,\rho_2}^{2,\eta}$ defined by~\eqref{eq:control-2}. We show that the control variate approach using the first order (resp. second order) surrogate model allows to decrease the variance from $O(\eta)$ to $O(\eta^2)$ (resp. from $O(\eta)$ to $O(\eta^3)$). 

Still in the regime $\eta \ll 1$, we show in Section~\ref{sec:theo_dD3} that, for an equal computational cost, the weakly stochastic approach proposed in~\cite{ALB_mms} (which directly compute $\EE(A^\star_{\eta,N})$ as in series in powers of $\eta$) is more accurate than the control variate approach proposed in this work. The regime of interest for our approach is therefore when $\eta$ is neither close to 0 nor to 1. This is the regime we consider in the numerical experiments of Section~\ref{sec:num}.

\subsection{One-dimensional case}
\label{sec:theo_1D}

In the one-dimensional case, we know that
$$
A^\star_{\eta,N}(\omega)
= 
\left( \frac{1}{N} \int_0^N \frac{1}{A_\eta(x,\omega)} \right)^{-1},
$$
where, for ease of notation, we set $Q_N=(0,N)$ rather than $Q_N=(-N/2,N/2)$ as before. In view of~\eqref{eq:weak-random}--\eqref{eq:random-form1}--\eqref{eq:ber}, we thus have
$$
\frac{1}{A^\star_{\eta,N}(\omega)}
= 
\frac{1}{N} \sum_{k=0}^{N-1} \int_k^{k+1} \frac{dx}{A_{\rm per}(x) + B_k^\eta(\omega) \Big(C_{\rm per}(x) - A_{\rm per}(x)\Big)}.
$$
Introducing the functions
$$
f(x) = \frac{1}{x} 
\quad \text{and} \quad
\phi(b) = \int_0^1 \frac{dx}{A_{\rm per}(x) + b \Big(C_{\rm per}(x) - A_{\rm per}(x)\Big)},
$$
we thus see that
$$
A^\star_{\eta,N}(\omega) = f\left(\frac{1}{N} \sum_{k=0}^{N-1} \phi(B_k^\eta(\omega)) \right).
$$
Since $B_k^\eta(\omega)$ are equal to 0 or 1, we can write $\phi(B_k^\eta(\omega)) = \phi(0)+ B_k^\eta(\omega) (\phi(1)-\phi(0))$, and thus 
\begin{equation}
\label{eq:astar-1D}
A^\star_{\eta,N}(\omega) = g\left(\frac{1}{N} \sum_{k=0}^{N-1} B_k^\eta(\omega) \right)
\end{equation}
where the smooth function $g$ is defined by $g(b) = f\Big(\phi(0)+ b (\phi(1)-\phi(0)) \Big)$.

\subsubsection{First order model}

In view of~\eqref{eq:control-1}, \eqref{eq:Y1} and~\eqref{eq:marg1}, the first-order surrogate model is given by $A^\star_{\rm per} + A^{\eta,N}_1(\omega)$, with
\begin{equation}
\label{eq:gigi2}
A^{\eta,N}_1(\omega)  
= 
\sum_{k=0}^{N-1} B_k^\eta(\omega) \, \overline{A}^{k,N}_{\rm 1 \, def}
= 
\overline{A}^{0,N}_{\rm 1 \, def} \sum_{k=0}^{N-1} B_k^\eta(\omega). 
\end{equation}
We first state the following general result, the proof of which is postponed until Section~\ref{sec:1D_proof}.
\begin{lemma}
\label{lem:theo_1D}
Let
$$
X(\omega) = g\left(\frac{1}{N} \sum_{k=0}^{N-1} B_k(\omega) \right)
$$
where $B_k(\omega)$ are i.i.d. random variables valued in $[0,1]$ and $g$ is a function in $C^3(\RR)$. Then 
\begin{equation}
\label{eq:resu1}
\Var(X) = \frac{(g'(\eta))^2 \, \sigma^2}{N} + O \left( \frac{1}{N^2} \right)
\end{equation}
with $\eta = \EE(B_0)$ and $\sigma = \sqrt{\Var(B_0)}$.

For any $\rho$, introduce 
\begin{equation}
\label{eq:def_Y1}
D_\rho(\omega) = X(\omega) - \rho \Big( Y_1(\omega) - \EE \left[ Y_1 \right] \Big)
\quad \text{where} \quad
Y_1(\omega) = \sum_{k=0}^{N-1} B_k(\omega).
\end{equation}
There exists a constant $C$ independent of $N$ and some deterministic parameter $\rho_N$ such that 
\begin{equation}
\label{eq:resu2}
\Var \left( D_{\rho_N} \right) \leq \frac{C}{N^2}.
\end{equation}
\end{lemma}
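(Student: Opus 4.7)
The plan is to work with the normalized sum $S_N = \frac{1}{N}\sum_{k=0}^{N-1} B_k$, so that $X = g(S_N)$ with $\EE[S_N] = \eta$ and $\Var(S_N) = \sigma^2/N$ by independence. Since $B_k \in [0,1]$, we have $S_N \in [0,1]$ almost surely, and the $C^3$ regularity of $g$ on this compact interval lets me Taylor-expand around $\eta$:
$$
g(S_N) = g(\eta) + g'(\eta)(S_N-\eta) + \tfrac{1}{2}g''(\eta)(S_N-\eta)^2 + R(S_N),
$$
with a remainder satisfying $|R(s)| \leq \tfrac{1}{6}\|g'''\|_{L^\infty([0,1])}|s-\eta|^3$. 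The key auxiliary estimate, obtained by expanding $\bigl(\sum_k(B_k-\eta)\bigr)^p$ and keeping only the index pairings that survive taking expectations, is the standard i.i.d. moment bound
$$
\EE\bigl[(S_N-\eta)^p\bigr] = O\!\left(N^{-\lceil p/2 \rceil}\right) \quad \text{for } p \geq 2.
$$

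For~\eqref{eq:resu1}, I would compute $\Var(X)$ by expanding the Taylor identity and collecting contributions. The leading variance is $(g'(\eta))^2 \Var(S_N) = (g'(\eta))^2\sigma^2/N$. All remaining variances and covariances reduce to centered moments $\EE[(S_N-\eta)^p]$ with $p \geq 3$, and the moment bound above makes each of them $O(1/N^2)$ or smaller (cross covariances being controlled via Cauchy--Schwarz). This yields~\eqref{eq:resu1}.

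For~\eqref{eq:resu2}, the key observation is that $Y_1 - \EE[Y_1] = N(S_N - \eta)$, so the calibration
$$
\rho_N = \frac{g'(\eta)}{N}
$$
is chosen to cancel exactly the linear term in the Taylor expansion:
$$
D_{\rho_N}(\omega) = g(\eta) + \tfrac{1}{2}g''(\eta)(S_N - \eta)^2 + R(S_N).
$$
Using $\Var(D_{\rho_N}) \leq \EE\bigl[(D_{\rho_N} - g(\eta))^2\bigr]$ together with the elementary inequality $(a+b)^2 \leq 2a^2 + 2b^2$, the right-hand side is dominated by $C_1 \EE[(S_N-\eta)^4] + C_2 \EE[(S_N-\eta)^6] = O(1/N^2)$, the first term being the binding one. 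The argument is essentially routine once the calibration $\rho_N = g'(\eta)/N$ is spotted; I expect no substantial obstacle, the real content being that $Y_1$ aggregates $N$ summands while $X$ depends only on their arithmetic mean, forcing $\rho_N$ to scale like $1/N$, after which the residual variance is driven by the quadratic Taylor term and inherits the $1/N^2$ rate from the fourth centered moment of $S_N$.
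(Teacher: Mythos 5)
Your argument is correct and follows essentially the same route as the paper: a Taylor expansion of $g$ around $\eta$, the i.i.d.\ centered-moment bounds $\EE\bigl[(S_N-\eta)^p\bigr]=O\bigl(N^{-\lceil p/2\rceil}\bigr)$, and the calibration $\rho_N=g'(\eta)/N$ that cancels the linear term, leaving a residual controlled by $\EE\bigl[(S_N-\eta)^4\bigr]=O(N^{-2})$. The one point to state carefully in your proof of~\eqref{eq:resu1} is the cross term $\Cov\bigl(S_N-\eta,(S_N-\eta)^2\bigr)$: it must be evaluated as the exact third moment $\EE\bigl[(S_N-\eta)^3\bigr]=O(N^{-2})$ (using the odd-moment cancellation you already recorded), since Cauchy--Schwarz applied to this particular pair would only give $O(N^{-3/2})$.
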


The following proposition, of direct interest to us, directly falls from the above lemma.
\begin{proposition}
\label{prop:theo_1D}
Consider the model~\eqref{eq:weak-random}--\eqref{eq:random-form1}--\eqref{eq:ber}. Let $A^\star_{\eta,N}$ be the apparent homogenized matrix defined by~\eqref{eq:A-N}--\eqref{eq:correcteur-random-N} and $D^{1,\eta}_\rho$ be the first-order controlled variable defined by~\eqref{eq:control-1}. In the one-dimensional case, we have
\begin{equation}
\label{eq:resu1_specific}
\Var(A^\star_{\eta,N}) = \frac{C}{N} + O \left( \frac{1}{N^2} \right)
\end{equation}
and, for the optimal value of the deterministic parameter $\rho$,
\begin{equation}
\label{eq:resu2_specific}
\min_\rho \Var \Big( D^{1,\eta}_\rho \Big) = \Var \Big( D^{1,\eta}_{\rho^\star} \Big) = O \left( \frac{1}{N^2} \right).
\end{equation}
\end{proposition}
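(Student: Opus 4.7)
The strategy is that Proposition~\ref{prop:theo_1D} is essentially a direct consequence of Lemma~\ref{lem:theo_1D} applied to the one-dimensional expressions derived just above, namely~\eqref{eq:astar-1D} and~\eqref{eq:gigi2}. The work reduces to (i) checking that $A^\star_{\eta,N}$ indeed fits the hypotheses of the lemma, and (ii) recasting $D^{1,\eta}_\rho$ in the exact form $D_\rho$ considered in~\eqref{eq:def_Y1}.

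For step (i), I observe from~\eqref{eq:astar-1D} that $A^\star_{\eta,N}(\omega)=g\!\left(\frac1N\sum_{k=0}^{N-1} B^\eta_k(\omega)\right)$, where $B^\eta_k$ are i.i.d.\ Bernoulli (hence $[0,1]$-valued) with mean $\eta$ and variance $\sigma^2=\eta(1-\eta)$. The function $g(b)=f(\phi(0)+b(\phi(1)-\phi(0)))$ with $f(x)=1/x$ is $C^\infty$ on its domain, since the uniform ellipticity assumption~\eqref{eq:elliptic} on $A_{\rm per}$ and $C_{\rm per}$ guarantees that $\phi(0)$ and $\phi(1)$ are positive, hence so is $\phi(0)+b(\phi(1)-\phi(0))$ for all $b\in[0,1]$. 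Lemma~\ref{lem:theo_1D}, formula~\eqref{eq:resu1}, then gives
\[
\Var(A^\star_{\eta,N}) \;=\; \frac{(g'(\eta))^2\,\eta(1-\eta)}{N} \;+\; O(N^{-2}),
\]
which is~\eqref{eq:resu1_specific} with $C=(g'(\eta))^2\eta(1-\eta)$ (generically nonzero, since $g'(\eta)=f'(\phi(0)+\eta(\phi(1)-\phi(0)))(\phi(1)-\phi(0))$ vanishes only in the degenerate case where the harmonic means of $A_{\rm per}$ and $C_{\rm per}$ coincide).

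For step (ii), I exploit the identity $\overline{A}^N_1 = N\,\overline{A}^{0,N}_{\rm 1\,def}$, which in the one-dimensional case follows from~\eqref{eq:decompo}, periodic boundary conditions, and the fact that $|\II_N|=N$ (so the single-defect contribution is independent of location, cf.~\eqref{eq:thanks_PBC}). Combined with~\eqref{eq:gigi2}, this yields
\[
A^{\eta,N}_1(\omega)-\eta\,\overline{A}^N_1 \;=\; \overline{A}^{0,N}_{\rm 1\,def}\,\Bigl(Y_1(\omega)-\EE[Y_1]\Bigr),\qquad Y_1(\omega)=\sum_{k=0}^{N-1}B^\eta_k(\omega),
\]
so that $D^{1,\eta}_\rho(\omega) = A^\star_{\eta,N}(\omega) - \widetilde{\rho}\,(Y_1(\omega)-\EE[Y_1])$ with $\widetilde{\rho}=\rho\,\overline{A}^{0,N}_{\rm 1\,def}$. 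Minimizing over $\rho$ is equivalent to minimizing over $\widetilde{\rho}\in\RR$ (assuming, as is generically the case, $\overline{A}^{0,N}_{\rm 1\,def}\neq 0$, otherwise the claim is trivial), hence by~\eqref{eq:resu2} there exists $\rho_N$ with $\Var(D^{1,\eta}_{\rho_N})\le C/N^2$, which proves~\eqref{eq:resu2_specific}.

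There is essentially no obstacle at the level of the proposition itself: the genuine technical work is pushed into Lemma~\ref{lem:theo_1D}, where the main step will be a Taylor expansion of $g$ around $\eta$ together with moment estimates for $\frac{1}{N}\sum B^\eta_k -\eta$, the first-order term being cancelled exactly by the choice of $\rho_N$ (which explains why the variance drops from order $N^{-1}$ to $N^{-2}$). The small technical point at the proposition level is the identity $\overline{A}^N_1 = N\,\overline{A}^{0,N}_{\rm 1\,def}$, which makes $A^{\eta,N}_1 - \EE[A^{\eta,N}_1]$ a pure scalar multiple of the sum $Y_1-\EE[Y_1]$ and is what allows the lemma to apply as stated.
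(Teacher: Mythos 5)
Your proposal is correct and follows essentially the same route as the paper: \eqref{eq:resu1_specific} is read off from \eqref{eq:astar-1D} and \eqref{eq:resu1}, and \eqref{eq:resu2_specific} follows by rewriting $D^{1,\eta}_\rho = X - \rho\,\overline{A}^{0,N}_{\rm 1\,def}\,(Y_1-\EE[Y_1])$ and invoking \eqref{eq:resu2}. The extra checks you supply (smoothness of $g$ via ellipticity, the identity $\overline{A}^N_1 = N\,\overline{A}^{0,N}_{\rm 1\,def}$, the degenerate case $\overline{A}^{0,N}_{\rm 1\,def}=0$) are left implicit in the paper but are all sound.
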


Using the control variate approach based on the first-order model, the variance is thus improved by at least one order in terms of $N$. Note in particular that, in the above results, we have not assumed $\eta$ to be small.

\begin{proof}[Proof of Proposition~\ref{prop:theo_1D}]
The proof of~\eqref{eq:resu1_specific} falls from~\eqref{eq:astar-1D} and~\eqref{eq:resu1}. We now prove~\eqref{eq:resu2_specific}. In view of~\eqref{eq:control-1}, \eqref{eq:astar-1D}, \eqref{eq:gigi2} and~\eqref{eq:def_Y1}, we see that
$$
D^{1,\eta}_\rho 
=
X(\omega) - \rho \overline{A}^{0,N}_{\rm 1 \, def} \Big( Y_1(\omega) - \EE \left[ Y_1 \right] \Big).
$$
Using~\eqref{eq:resu2}, we thus have
$$
\min_\rho \Var \Big( D^{1,\eta}_\rho \Big) 
\leq
\Var \Big( D_{\rho_N} \Big)
\leq
\frac{C}{N^2},
$$
which concludes the proof of Proposition~\ref{prop:theo_1D}.
\end{proof}

\subsubsection{Second order model}

In view of~\eqref{eq:control-3}, \eqref{eq:Y1}, \eqref{eq:Y2} and~\eqref{eq:Y3}, the second-order controlled variable reads
\begin{multline*}
D_{\rho_1,\rho_2,\rho_3}^{3,\eta}(\omega) 
= 
A^\star_{\eta,N}(\omega) - \rho_1 \overline{A}^{0,N}_{\rm 1 \, def} \sum_{k=0}^{N-1} \Big( B_k^\eta(\omega) - \eta \Big)
\\
- \rho_2 \overline{A}^{0,1,N}_{\rm 2 \, def} \sum_{k \neq l}^{N-1} \Big( B_k^\eta(\omega) B_l^\eta(\omega) - \eta^2 \Big)
\\
- \rho_3 \overline{C}^{0,1,N}_{\rm 2 \, def} \sum_{k \neq l}^{N-1} \Big( (1-B_k^\eta(\omega)) (1- B_l^\eta(\omega)) - (1-\eta)^2 \Big)
\end{multline*}
where we have used~\eqref{eq:thanks_PBC} and the fact that, in the one-dimensional case, $\overline{A}^{k,l,N}_{\rm 2 \, def}$ and $\overline{C}^{k,l,N}_{\rm 2 \, def}$ are independent of $k$ and $l$. We hence obtain that
\begin{equation}
\label{eq:gigi4}
D_{\rho_1,\rho_2,\rho_3}^{3,\eta}(\omega) 
= 
A^\star_{\eta,N}(\omega) - \overline{\rho_1} \sum_{k=0}^{N-1} \Big( B_k^\eta(\omega) - \eta \Big)
-  \overline{\rho_2} \sum_{k \neq l}^{N-1} \Big( B_k^\eta(\omega) B_l^\eta(\omega) - \eta^2 \Big)
\end{equation}
with
$$
\overline{\rho_1}
=
\rho_1 \overline{A}^{0,N}_{\rm 1 \, def} 
-
2 (N-1) \rho_3 \overline{C}^{0,1,N}_{\rm 2 \, def}, 
\qquad
\overline{\rho_2}
=
\rho_2 \overline{A}^{0,1,N}_{\rm 2 \, def} 
+ 
\rho_3 \overline{C}^{0,1,N}_{\rm 2 \, def}. 
$$
We first state the following general result, the proof of which is postponed until Section~\ref{sec:1D_proof}.
\begin{lemma}
\label{lem:theo_1D_suite}
Let
$$
X(\omega) = g\left(\frac{1}{N} \sum_{k=0}^{N-1} B_k(\omega) \right)
$$
where $g$ is a function in $C^3(\RR)$ and $B_k(\omega)$ are i.i.d. random variables taking values in $\{0,1\}$. Let $Y_1$ be defined by~\eqref{eq:def_Y1} and $Y_2$ be defined by
\begin{equation}
\label{eq:def_Y2}
Y_2(\omega) = \sum_{k=0}^{N-1} \sum_{l=0, l \neq k}^{N-1} B_k(\omega) B_l(\omega).
\end{equation}
There exists a constant $C$ independent of $N$ and some deterministic parameters $\overline{\rho_1}$ and $\overline{\rho_2}$ (that depend on $N$) such that 
\begin{equation}
\label{eq:resu3}
\Var \Big( \overline{D}_{\overline{\rho_1},\overline{\rho_2}} \Big) \leq \frac{C}{N^3}
\end{equation}
where $\dis
\overline{D}_{\overline{\rho_1},\overline{\rho_2}}(\omega)
= 
X(\omega) - \overline{\rho_1} \Big( Y_1(\omega) - \EE(Y_1) \Big) - \overline{\rho_2} \Big( Y_2(\omega) - \EE(Y_2) \Big)$.
\end{lemma}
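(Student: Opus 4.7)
The plan is to Taylor-expand $X$ to third order around $\eta = \EE(B_0)$ and then choose $\overline{\rho_1},\overline{\rho_2}$ to annihilate the first- and second-order terms, leaving only a cubic remainder whose variance is $O(N^{-3})$. Set
$$
Z(\omega)=\frac{1}{N}\sum_{k=0}^{N-1}\bigl(B_k(\omega)-\eta\bigr)=\frac{1}{N}\bigl(Y_1(\omega)-\EE Y_1\bigr),
$$
so that $\EE Z=0$, $|Z|$ is bounded (since $B_k\in\{0,1\}$ implies $Z\in[-\eta,1-\eta]$), and a standard computation for sums of i.i.d. centered bounded variables gives $\EE(Z^{2p})\le C_p N^{-p}$, in particular $\EE(Z^6)\le C N^{-3}$.

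The key algebraic observation is that $B_k^2=B_k$, so
$$
Y_2=\Bigl(\sum_{k}B_k\Bigr)^2-\sum_k B_k^2=Y_1^2-Y_1=N^2(\eta+Z)^2-N(\eta+Z),
$$
which makes $Y_2-\EE Y_2$ a \emph{polynomial in $Z$}:
$$
Y_2-\EE Y_2=(2\eta N^2-N)Z+N^2\bigl(Z^2-\EE Z^2\bigr).
$$
Similarly $Y_1-\EE Y_1=NZ$. Using $g\in C^3$, Taylor's formula gives
$$
X=g(\eta)+g'(\eta)Z+\tfrac12 g''(\eta)Z^2+R_3,\qquad |R_3|\le C|Z|^3,
$$
and hence $X-\EE X=g'(\eta)Z+\tfrac12 g''(\eta)(Z^2-\EE Z^2)+(R_3-\EE R_3)$. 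Substituting everything into $\overline{D}_{\overline{\rho_1},\overline{\rho_2}}-\EE\overline{D}_{\overline{\rho_1},\overline{\rho_2}}$ and grouping by powers of $Z$ yields
$$
\bigl[g'(\eta)-\overline{\rho_1}N-\overline{\rho_2}(2\eta N^2-N)\bigr]Z+\bigl[\tfrac12 g''(\eta)-\overline{\rho_2}N^2\bigr](Z^2-\EE Z^2)+(R_3-\EE R_3).
$$

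I then choose the deterministic parameters
$$
\overline{\rho_2}=\frac{g''(\eta)}{2N^2},\qquad \overline{\rho_1}=\frac{g'(\eta)-\eta g''(\eta)}{N}+\frac{g''(\eta)}{2N^2},
$$
which is the unique solution of the $2\times 2$ linear system making both bracketed coefficients vanish. With this choice $\overline{D}_{\overline{\rho_1},\overline{\rho_2}}-\EE\overline{D}_{\overline{\rho_1},\overline{\rho_2}}=R_3-\EE R_3$, so
$$
\Var\bigl(\overline{D}_{\overline{\rho_1},\overline{\rho_2}}\bigr)\le \EE(R_3^2)\le C^2\,\EE(Z^6)\le \frac{C'}{N^3},
$$
which is (\ref{eq:resu3}).

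The only non-trivial input is the bound $\EE(Z^6)\le C N^{-3}$, which is the main obstacle but is standard: expanding $\bigl(\sum_k(B_k-\eta)\bigr)^6$ and using independence plus $\EE(B_k-\eta)=0$ leaves only monomials in which every index appears at least twice, and the dominant such terms contribute $O(N^3)$. Everything else is algebraic manipulation made possible by the identity $B_k^2=B_k$; without it, $Y_2$ would contribute additional stochastic fluctuations at order $Z$ and could not be used to cancel the second-order Taylor term with a deterministic coefficient.
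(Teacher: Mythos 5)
Your proof is correct and follows essentially the same route as the paper: a third-order Taylor expansion of $g$ around $\eta$, the observation that $Y_2=Y_1^2-Y_1$ (equivalently, that $\bigl(\sum_k(B_k-\eta)\bigr)^2$ is an affine combination of $Y_1$ and $Y_2$), the choice $\overline{\rho_2}=g''(\eta)/(2N^2)$ and $\overline{\rho_1}=\bigl(g'(\eta)-\eta g''(\eta)\bigr)/N+g''(\eta)/(2N^2)$ to cancel the first- and second-order terms, and the sixth-moment bound $\EE(Z^6)\le C N^{-3}$ for the cubic remainder. Your $\overline{\rho_1},\overline{\rho_2}$ coincide exactly with those in the paper, and the moment bound you invoke is the paper's estimate~\eqref{eq:fondam} with $p=6$.
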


The following proposition directly falls from the above lemma.
\begin{proposition}
\label{prop:theo_1D_suite}
Consider the model~\eqref{eq:weak-random}--\eqref{eq:random-form1}--\eqref{eq:ber}. Let $A^\star_{\eta,N}$ be the apparent homogenized matrix defined by~\eqref{eq:A-N}--\eqref{eq:correcteur-random-N} and $D_{\rho_1,\rho_2,\rho_3}^{3,\eta}(\omega)$ be the second-order controlled variable defined by~\eqref{eq:control-3}. In the one-dimensional case, for the optimal value of the deterministic parameters $\rho_1$, $\rho_2$ and $\rho_3$, we have
\begin{equation}
\label{eq:resu3_specific}
\min_{\rho_1,\rho_2,\rho_3} \Var \Big( D^{3,\eta}_{\rho_1,\rho_2,\rho_3} \Big) = O \left( \frac{1}{N^3} \right).
\end{equation}
\end{proposition}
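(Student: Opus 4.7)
The plan is to recognize Proposition~\ref{prop:theo_1D_suite} as an almost immediate corollary of Lemma~\ref{lem:theo_1D_suite}, all the hard analytic work being concentrated in the lemma itself. The starting point is the identity~\eqref{eq:gigi4}, which already rewrites $D^{3,\eta}_{\rho_1,\rho_2,\rho_3}$ in the form
$$
A^\star_{\eta,N}(\omega) - \overline{\rho_1}\bigl(Y_1(\omega) - \EE[Y_1]\bigr) - \overline{\rho_2}\bigl(Y_2(\omega) - \EE[Y_2]\bigr),
$$
with $Y_1$ and $Y_2$ as in~\eqref{eq:def_Y1}--\eqref{eq:def_Y2}, and with $(\overline{\rho_1},\overline{\rho_2})$ an explicit affine function of $(\rho_1,\rho_2,\rho_3)$. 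Combined with the one-dimensional expression~\eqref{eq:astar-1D} of $A^\star_{\eta,N}$ as $g(N^{-1}\sum_k B_k^\eta)$, where $g$ is smooth on a neighborhood of $[0,1]$ thanks to the uniform ellipticity~\eqref{eq:elliptic} of $A_{\rm per}$ and $C_{\rm per}$, this puts $D^{3,\eta}_{\rho_1,\rho_2,\rho_3}$ exactly in the framework of $\overline{D}_{\overline{\rho_1},\overline{\rho_2}}$ covered by Lemma~\ref{lem:theo_1D_suite}.

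The next step is to check that the linear map $(\rho_1,\rho_2,\rho_3) \mapsto (\overline{\rho_1},\overline{\rho_2})$ read off from~\eqref{eq:gigi4} is surjective onto $\RR^2$. Assuming the generic case $\overline{A}^{0,N}_{\rm 1\,def}\neq 0$ and $\overline{A}^{0,1,N}_{\rm 2\,def}\neq 0$ (otherwise the control variate is itself degenerate and the analysis reduces to a strictly simpler situation), taking $\rho_3=0$ already exhibits the surjectivity through $(\rho_1,\rho_2) \mapsto (\rho_1\overline{A}^{0,N}_{\rm 1\,def},\,\rho_2\overline{A}^{0,1,N}_{\rm 2\,def})$. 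Consequently, for the specific deterministic parameters $(\overline{\rho_1}^\star,\overline{\rho_2}^\star)$ whose existence is asserted by Lemma~\ref{lem:theo_1D_suite}, one can exhibit corresponding values $(\rho_1^\star,\rho_2^\star,\rho_3^\star)$ mapping to them, and the two controlled variables then coincide pathwise.

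With this change of parameters in hand, the conclusion is immediate: Lemma~\ref{lem:theo_1D_suite} yields
$$
\Var\Bigl(D^{3,\eta}_{\rho_1^\star,\rho_2^\star,\rho_3^\star}\Bigr) = \Var\Bigl(\overline{D}_{\overline{\rho_1}^\star,\overline{\rho_2}^\star}\Bigr) \le \frac{C}{N^3},
$$
and since this upper-bounds $\min_{\rho_1,\rho_2,\rho_3}\Var(D^{3,\eta}_{\rho_1,\rho_2,\rho_3})$, the bound~\eqref{eq:resu3_specific} follows. I do not expect any genuine obstacle in this corollary; the only piece of real content lies in Lemma~\ref{lem:theo_1D_suite}, whose proof must carry out a careful Taylor expansion of $g$ around $\eta$ and quantify the cancellations between $X-\EE[X]$, $Y_1-\EE[Y_1]$ and $Y_2-\EE[Y_2]$ at the appropriate order in $N$.
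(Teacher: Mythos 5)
Your proposal is correct and follows essentially the same route as the paper: the paper's proof likewise invokes the rewriting~\eqref{eq:gigi4} to put $D^{3,\eta}_{\rho_1,\rho_2,\rho_3}$ in the form $X - \overline{\rho_1}(Y_1-\EE[Y_1]) - \overline{\rho_2}(Y_2-\EE[Y_2])$ and then bounds the minimum over $(\rho_1,\rho_2,\rho_3)$ by the variance of $\overline{D}_{\overline{\rho_1},\overline{\rho_2}}$ from Lemma~\ref{lem:theo_1D_suite}. Your explicit check that the map $(\rho_1,\rho_2,\rho_3)\mapsto(\overline{\rho_1},\overline{\rho_2})$ is onto (needed so that the lemma's parameters are actually attainable) is left implicit in the paper, so your write-up is if anything slightly more careful.
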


We recall that 
$$
\Var \Big(A^\star_{\eta,N} \Big) = \frac{C}{N} + O \left( \frac{1}{N^2} \right).
$$
Thus, using the control variate approach based on the second-order model, the variance is improved by at least two orders in terms of $N$. This result is to be compared with Proposition~\ref{prop:theo_1D}.

\begin{proof}[Proof of Proposition~\ref{prop:theo_1D_suite}]
In view of~\eqref{eq:gigi4}, \eqref{eq:astar-1D}, \eqref{eq:def_Y1} and~\eqref{eq:def_Y2}, we see that
$$
D_{\rho_1,\rho_2,\rho_3}^{3,\eta}(\omega) 
= 
X(\omega) - \overline{\rho_1} \Big( Y_1(\omega) - \EE \left[ Y_1 \right] \Big)
-  \overline{\rho_2} \Big( Y_2(\omega) - \EE \left[ Y_2 \right] \Big).
$$
Using~\eqref{eq:resu3}, we thus have
$$
\min_{\rho_1,\rho_2,\rho_3} \Var \Big( D^{3,\eta}_{\rho_1,\rho_2,\rho_3} \Big)
\leq
\Var \Big( \overline{D}_{\overline{\rho_1},\overline{\rho_2}} \Big)
\leq
\frac{C}{N^3},
$$
which concludes the proof of Proposition~\ref{prop:theo_1D_suite}.
\end{proof}

\subsubsection{Proofs of Lemmas~\ref{lem:theo_1D} and~\ref{lem:theo_1D_suite}}
\label{sec:1D_proof}

\begin{proof}[Proof of Lemma~\ref{lem:theo_1D}]
Introducing the centered random variables
$$
d_k(\omega) = B_k(\omega) - \eta
$$
and a smooth function $h$ on $[0,1]$, we write 
\begin{eqnarray}
h \left( \frac{1}{N} \sum_{k=0}^{N-1} B_k(\omega) \right)
&=&
h \left( \eta + \frac{1}{N} \sum_{k=0}^{N-1} d_k(\omega) \right)
\nonumber
\\
&=&
h(\eta) + \frac{h'(\eta)}{N} \sum_{k=0}^{N-1} d_k(\omega) 
+ \frac{h''(\eta)}{2} \left( \frac{1}{N} \sum_{k=0}^{N-1} d_k(\omega) \right)^2
\nonumber
\\
&& \qquad \qquad \qquad 
+ \frac{h'''(\theta_3^N(\omega))}{6} \left( \frac{1}{N} \sum_{k=0}^{N-1} d_k(\omega) \right)^3 
\label{eq:gigi}
\end{eqnarray}
for some $\theta_3^N(\omega) \in [0,1]$. Recall now that any i.i.d. variables $d_k$ with mean value zero satisfy the following bounds:
\begin{equation}
\label{eq:fondam}
\forall p \in \NN^\star, \ \exists C_p > 0, \quad 
\left| \EE \left[ 
\left( \frac{1}{N} \sum_{k=0}^{N-1} d_k \right)^p
\right] \right|
\leq
\left\{
\begin{array}{c}
\dis \frac{C_p}{N^{p/2}} \text{ if $p$ is even;}
\\ \noalign{\vskip 3pt}
\dis \frac{C_p}{N^{(p+1)/2}} \text{ if $p$ is odd.}
\end{array}
\right.
\end{equation}
This is proved by developing the power $p$ of the sum, and then using the fact that the variables are i.i.d and have mean value zero. Taking expectations in~\eqref{eq:gigi}, we thus deduce that
$$
\EE \left[ 
h \left( \frac{1}{N} \sum_{k=0}^{N-1} B_k(\omega) \right)
\right]
=
h(\eta) 
+ \frac{h''(\eta)}{2N} \sigma^2 
+ O \left( \frac{1}{N^2} \right), 
$$
where $\sigma^2 = \EE[ d_0^2 ] = \Var(B_0)$. Choosing $h(x) = g(x)$ and $h(x) = (g(x))^2$, we obtain~\eqref{eq:resu1}.

\medskip

We next turn to proving~\eqref{eq:resu2}. As in~\eqref{eq:gigi}, we have
\begin{eqnarray*}
X(\omega)
&=&
g \left( \frac{1}{N} \sum_{k=0}^{N-1} B_k(\omega) \right)
\\
&=&
g(\eta) + \frac{g'(\eta)}{N} \sum_{k=0}^{N-1} d_k(\omega) 
+ \frac{g''(\theta_2^N(\omega))}{2} \, S_N(\omega)
\\
&=&
g(\eta) + \frac{g'(\eta)}{N} \Big( Y_1(\omega) - \EE \left[ Y_1 \right] \Big)
+ \frac{g''(\theta_2^N(\omega))}{2} \, S_N(\omega)
\end{eqnarray*}
for some $\theta_2^N(\omega) \in [0,1]$, where $\dis S_N(\omega) = \left( \frac{1}{N} \sum_{k=0}^{N-1} d_k(\omega) \right)^2$. Set $\dis \rho_N = \frac{g'(\eta)}{N}$. Then
$$
D_{\rho_N}(\omega) 
= 
X(\omega) - \rho_N \Big( Y_1(\omega) - \EE \left[ Y_1 \right] \Big)
=
g(\eta) + \frac{g''(\theta_2^N(\omega))}{2} \, S_N(\omega).
$$
Using~\eqref{eq:fondam}, we thus obtain that
$$
\Var \left( D_{\rho_N} \right) 
\leq 
\EE \left[ \left( \frac{g''(\theta_2^N(\omega))}{2} \, S_N(\omega) \right)^2 \right]
\leq
C \EE \left[ (S_N)^2 \right]
\leq
\frac{C}{N^2}
$$
which is the claimed bound~\eqref{eq:resu2}. This concludes the proof of Lemma~\ref{lem:theo_1D}. 
\end{proof}

\begin{proof}[Proof of Lemma~\ref{lem:theo_1D_suite}]
We follow the same lines as in the proof of Lemma~\ref{lem:theo_1D}. 
Introducing the centered random variables
$$
d_k(\omega) = B_k(\omega) - \eta,
$$
we write, as in~\eqref{eq:gigi}, that
\begin{eqnarray}
X(\omega)
&=&
g \left( \frac{1}{N} \sum_{k=0}^{N-1} B_k(\omega) \right)
\nonumber
\\
&=&
g(\eta) + \frac{g'(\eta)}{N} \sum_{k=0}^{N-1} d_k(\omega) 
+ \frac{g''(\eta)}{2N^2} \left( \sum_{k=0}^{N-1} d_k(\omega) \right)^2 
+ \frac{g'''(\theta_3^N(\omega))}{6} S_N(\omega)
\label{eq:gigi3}
\end{eqnarray}
for some $\theta_3^N(\omega) \in [0,1]$, where $\dis S_N(\omega) = \left( \frac{1}{N} \sum_{k=0}^{N-1} d_k(\omega) \right)^3$. We now recall that $\dis \sum_{k=0}^{N-1} d_k(\omega) = Y_1 - \EE(Y_1)$. Furthermore, we compute that
$$
\left( \sum_{k=0}^{N-1} d_k(\omega) \right)^2 
=
N^2 \eta^2 + (1-2N\eta) Y_1(\omega) + Y_2(\omega).
$$
We thus recast~\eqref{eq:gigi3} as
$$
X(\omega)
=
{\cal C} + \frac{g'(\eta)}{N} Y_1(\omega) 
+ \frac{g''(\eta)}{2N^2} \Big( (1-2N\eta) Y_1(\omega) + Y_2(\omega) \Big)
+ \frac{g'''(\theta_3^N(\omega))}{6} S_N(\omega)
$$
where ${\cal C}$ is a deterministic quantity.

Set $\dis \overline{\rho_1} = \frac{g'(\eta)}{N} + \frac{g''(\eta)}{2N^2} (1-2N\eta)$ and $\dis \overline{\rho_2} = \frac{g''(\eta)}{2N^2}$. Then
$$
\overline{D}_{\overline{\rho_1},\overline{\rho_2}}(\omega)
= 
X(\omega) - \overline{\rho_1} \Big( Y_1(\omega) - \EE(Y_1) \Big) - \overline{\rho_2} \Big( Y_2(\omega) - \EE(Y_2) \Big)
=
{\cal C} + \frac{g'''(\theta_3^N(\omega))}{6} \, S_N(\omega).
$$
Using~\eqref{eq:fondam}, we thus obtain that
$$
\Var \left( \overline{D}_{\overline{\rho_1},\overline{\rho_2}}\right) 
\leq 
\EE \left[ \left( \frac{g'''(\theta_3^N(\omega))}{6} \, S_N(\omega) \right)^2 \right]
\leq
C \EE \left[ (S_N)^2 \right]
\leq
\frac{C}{N^3}
$$
which is the claimed bound~\eqref{eq:resu3}. This concludes the proof of Lemma~\ref{lem:theo_1D_suite}.
\end{proof}

\subsection{Multi-dimensional case}
\label{sec:theo_dD}

\subsubsection{Proof of Lemma~\ref{lem:approx_loi}}
\label{sec:theo_dD1}

The proof follows the same lines as that of~\eqref{eq:dl-weakly-random}. It falls by enumerating the possible configurations according to the number of defects they include. We thus have, following Section~\ref{sec:ALB_result},
\begin{equation}
\label{eq:toto}
\EE \left[ \varphi \left( A^\star_{\eta,N} \right) \right]
=
(1-\eta)^{|Q_N|} \varphi \left( A^\star_{\rm per} \right)
+
\sum_{k \in \II_N} \eta (1-\eta)^{|Q_N|-1} \varphi \left( A^\star_{1,k,N} \right)
+
O_N(\eta^2).
\end{equation}
On the other hand, using~\eqref{eq:Y1} and~\eqref{eq:marg1}, we write
\begin{eqnarray*}
&&
\EE \left[ \varphi \left( A^\star_{\rm per} + A^{\eta,N}_1 \right) \right]
\\
&=&
(1-\eta)^{|Q_N|} \varphi \left( A^\star_{\rm per} \right)
+
\sum_{k \in \II_N} \eta (1-\eta)^{|Q_N|-1} \varphi \left( A^\star_{\rm per} + \overline{A}^{k,N}_{\rm 1 \, def} \right)
+
O_N(\eta^2)
\\
&=&
(1-\eta)^{|Q_N|} \varphi \left( A^\star_{\rm per} \right)
+
\sum_{k \in \II_N} \eta (1-\eta)^{|Q_N|-1} \varphi \left( A^\star_{1,k,N} \right)
+
O_N(\eta^2).
\end{eqnarray*}
We deduce from the above relation and~\eqref{eq:toto} the claimed result.

\subsubsection{Estimates of the variances as a function of $\eta$}
\label{sec:theo_dD2}

Lemmas~\ref{lem:approx_loi} and~\ref{lem:approx_loi2} show that our surrogate model is a good approximation (in terms of its law) of the random variable $A^\star_{\eta,N}$. The lemma below shows, again in the regime $\eta \ll 1$, that variance is indeed decreased. 

\medskip

Consider any entry $ij$ of the homogenized matrix. The estimation of $\EE \left[ \left( A^\star_{\eta,N} \right)_{ij} \right]$ can be done by a Monte Carlo empirical mean on $\left( A^\star_{\eta,N}(\omega) \right)_{ij}$, $\left( D^{1,\eta}_{\rho}(\omega) \right)_{ij}$ (see Section~\ref{sec:order-1}) or $\left( D^{2,\eta}_{\rho_1,\rho_2}(\omega) \right)_{ij}$ (see Section~\ref{sec:order-2}).
 
\begin{lemma}
\label{lem:variance-scaling}
For any entry $ij$ of the homogenized matrix, we have 
\begin{eqnarray}
\label{eq:tutu1}
\Var \left[ \left( A^\star_{\eta,N} \right)_{ij} \right] &=& \eta C^0_N + O_N(\eta^2), 
\\
\label{eq:tutu2}
\Var \left[ \left( D^{1,\eta}_{\rho=1} \right)_{ij} \right] &=& O_N(\eta^2), 
\\
\label{eq:tutu3}
\Var \left[ \left( D^{2,\eta}_{\rho_1=\rho_2=1} \right)_{ij} \right] &=& O_N(\eta^3),
\end{eqnarray}
where $C^0_N$ is a positive constant.
\end{lemma}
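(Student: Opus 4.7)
The plan is to exploit the fact that $A^\star_{\eta,N}(\omega)$ depends only on the finitely many i.i.d. Bernoulli variables $\{B_k^\eta(\omega),\ k \in \II_N\}$ with $\PP(B_k=1)=\eta$, so I can enumerate realizations by the number of defects. Two combinatorial facts drive everything: the probability of finding at least $j$ defects in $Q_N$ is $O_N(\eta^j)$, and, by ellipticity~\eqref{eq:elliptic}, each of the random matrices involved ($A^\star_{\eta,N}$, $A^{\eta,N}_1$, $A^{\eta,N}_2$) is bounded by a deterministic constant depending only on $c$, $C$ and $N$. This makes it legitimate to absorb any contribution from ``many defects'' configurations into the desired $O_N(\eta^j)$ remainders.

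For~\eqref{eq:tutu1} I would directly expand $\EE\!\left[\left(A^\star_{\eta,N}\right)_{ij}\right]$ and $\EE\!\left[\left(A^\star_{\eta,N}\right)_{ij}^2\right]$ using the weights $(1-\eta)^{|\II_N|}$ and $\eta(1-\eta)^{|\II_N|-1}$, keeping only the 0-defect and 1-defect contributions and lumping the rest into $O_N(\eta^2)$. Computing $\Var = \EE[(\cdot)^2] - (\EE[\cdot])^2$, the $O(1)$ and $O(\eta)$ terms cancel by the same combinatorial identity used to prove Lemma~\ref{lem:approx_loi}, and one is left with the coefficient $C^0_N = \sum_{k \in \II_N} \left(\overline{A}^{k,N}_{\rm 1\, def}\right)_{ij}^{2} = |\II_N|\,\left(\overline{A}^{0,N}_{\rm 1\, def}\right)_{ij}^{2}$ (using~\eqref{eq:thanks_PBC}), which is strictly positive under the nontriviality assumption that the single-defect correction does not vanish.

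The heart of the argument for~\eqref{eq:tutu2} and~\eqref{eq:tutu3} is an exact algebraic cancellation, not an expansion. Substituting directly into~\eqref{eq:control-1_bis} (with $\rho=1$), and writing~\eqref{eq:Y1} and~\eqref{eq:marg1} out on a configuration with 0 or 1 defects, one finds that on every such realization
\[
D^{1,\eta}_{\rho=1}(\omega) \;=\; A^\star_{\rm per} + \eta\,\overline{A}^N_1,
\]
since the 0- and 1-defect values of $A^\star_{\eta,N}$ are exactly matched by $A^{\eta,N}_1$. An analogous substitution into~\eqref{eq:control-2} with $\rho_1=\rho_2=1$, using~\eqref{eq:Y2} and the definitions~\eqref{eq:marg1}--\eqref{eq:marg2}, produces a telescoping cancellation that yields, on every realization with 0, 1 or 2 defects,
\[
D^{2,\eta}_{\rho_1=\rho_2=1}(\omega) \;=\; A^\star_{\rm per} + \eta\,\overline{A}^N_1 + \eta^2\,\overline{A}^N_2.
\]

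Granted these two identities, the variance estimates are immediate via $\Var[X] \leq \EE[(X-c)^2]$ for any deterministic $c$: taking $c$ to be the common deterministic value above, the integrand vanishes on configurations with $\leq 1$ defect (resp. $\leq 2$ defects), and is uniformly bounded on the remaining event, which has probability $O_N(\eta^2)$ (resp. $O_N(\eta^3)$). I expect the main obstacle to be the careful verification of the telescoping identity for the second-order controlled variable: one must notice that the pair-of-defects sum~\eqref{eq:Y2} evaluates, on a two-defect configuration at sites $k_0,l_0$, to $\overline{A}^{k_0,l_0,N}_{\rm 2\, def}$ thanks to the symmetry $\overline{A}^{k,l,N}_{\rm 2\, def} = \overline{A}^{l,k,N}_{\rm 2\, def}$ and the prefactor $1/2$, so that the definition~\eqref{eq:marg2} of the marginal pair-contribution is the exact combination needed for the cancellation.
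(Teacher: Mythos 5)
Your proposal is correct and follows essentially the same route as the paper: enumerate configurations by their number of defects, observe that the controlled variable is exactly deterministic (via the cancellation $A^\star_{1,k,N}-\overline{A}^{k,N}_{\rm 1\,def}=A^\star_{\rm per}$ and its two-defect analogue) on all configurations with at most one, respectively two, defects, and absorb the remaining configurations, of probability $O_N(\eta^2)$ resp.\ $O_N(\eta^3)$, into the remainder; your value $C^0_N=|\II_N|\bigl(\overline{A}^{0,N}_{\rm 1\,def}\bigr)_{ij}^2$ also agrees with what the paper's expansion yields. The only cosmetic difference is that you state the pointwise identity and invoke $\Var[X]\leq\EE[(X-c)^2]$, whereas the paper performs the equivalent computation by expanding $\EE[\varphi(\cdot)]$ for $\varphi(M)=M_{ij}$ and $\varphi(M)=M_{ij}^2$.
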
 

In practice, we would not necessarily work with $\rho=1$, but with the optimal parameter $\rho^\star$. A direct consequence of~\eqref{eq:tutu2} is of course that
$$
\Var \left[ \left( D^{1,\eta}_{\rho^\star} \right)_{ij} \right] 
=
\inf_\rho \Var \left[ \left( D^{1,\eta}_\rho \right)_{ij} \right]
= O_N(\eta^2).
$$ 

\begin{remark}
Even though the variance of $D^{1,\eta}_{\rho^\star}$ is much smaller than that of $A^\star_{\eta,N}$, we will see in Section~\ref{sec:theo_dD3} below that, in the regime $\eta \ll 1$, the weakly stochastic approximation described in Section~\ref{sec:ALB_result} is even more efficient. 
\end{remark}

\begin{proof}
We infer from~\eqref{eq:toto} and~\eqref{eq:thanks_PBC} that, for any function $\varphi$,
$$
\EE \left[ \varphi \left( A^\star_{\eta,N} \right) \right]
=
\varphi \left( A^\star_{\rm per} \right)
+
\eta |Q_N| \Big( \varphi \left( A^\star_{1,0,N} \right) - \varphi \left( A^\star_{\rm per} \right) \Big)
+
O_N(\eta^2).
$$
Taking $\varphi(M) = M_{ij}$ and $\varphi(M) = M_{ij}^2$, we obtain~\eqref{eq:tutu1}.

\medskip

We next turn to proving~\eqref{eq:tutu2}. For any function $\varphi$, we write, using~\eqref{eq:control-1} and~\eqref{eq:marg1}, that
\begin{eqnarray*}
&& \EE \left[ \varphi \left( D^{1,\eta}_{\rho=1} \right) \right] 
\\
&=& 
(1-\eta)^{|Q_N|} \varphi \left( A^{\star}_{\rm per} + \eta \overline{A}^N_1 \right) 
\\
&& \qquad
+ \sum_{k \in \II_N} \eta (1-\eta)^{|Q_N|-1} \varphi \left( A^\star_{1,k,N} - \overline{A}^{k,N}_{\rm 1 \, def} + \eta \overline{A}^N_1  \right) + O_N(\eta^2)
\\
&=& 
(1-\eta)^{|Q_N|} \varphi \left( A^{\star}_{\rm per} + \eta \overline{A}^N_1 \right) + \sum_{k \in \II_N} \eta (1-\eta)^{|Q_N|-1} \varphi \left( A^\star_{\rm per} + \eta \overline{A}^N_1  \right) + O_N(\eta^2)
\\
&=& 
\varphi \left( A^{\star}_{\rm per} + \eta \overline{A}^N_1 \right) + O_N(\eta^2).
\end{eqnarray*}
Taking $\varphi(M) = M_{ij}$ and $\varphi(M) = M_{ij}^2$, we obtain~\eqref{eq:tutu2}. The proof of~\eqref{eq:tutu3} follows the same lines.
\end{proof}

\subsubsection{Comparison to a weakly stochastic approach}
\label{sec:theo_dD3}

In the regime $\eta \ll 1$, we have three approaches at our disposal to estimate $\EE \left[ A^\star_{\eta,N} \right]$: the standard Monte Carlo approach, the control variate approach, and the weakly stochastic approach described in Section~\ref{sec:ALB_result}. We compare here their efficiency. Let ${\cal C}_N$ be the cost to solve a single corrector problem on $Q_N$.

The standard Monte Carlo approach amounts to writing
$$
\EE \left[ A^\star_{\eta,N} \right] \approx \frac{1}{M} \sum_{m=1}^M A^{\star,m}_{\eta,N}(\omega).
$$
In the above approximation, the error on the entry $ij$ is controlled by $\dis \sqrt{ \Var \left[ \left( A^\star_{\eta,N} \right)_{ij} \right] / M}$. In view of~\eqref{eq:tutu1}, it is thus of the order of $\sqrt{ \eta/M }$. The cost is $M \, {\cal C}_N$.

\medskip

The control variate approach (say using the first order surrogate model) amounts to writing
$$
\EE \left[ A^\star_{\eta,N} \right] \approx \frac{1}{M} \sum_{m=1}^M D^{1,\eta,m}_\rho(\omega),
$$
where $D^{1,\eta}_\rho(\omega)$ is defined by~\eqref{eq:control-1}. The error is of the order of $\sqrt{\eta^2/M}$ in view of~\eqref{eq:tutu2}. The cost is that of solving $M$ corrector problems and that of determining $\overline{A}^{0,N}_{\rm 1 \, def}$, namely $(1+M) \, {\cal C}_N$.

\medskip

Using the same kind of information as in the above control variate approach, the weakly stochastic approximation~\eqref{eq:dl-weakly-random} reads
$$
\EE \left[ A^\star_{\eta,N} \right] \approx A^\star_{\rm per} + \eta \overline{A}^N_1.
$$
The error is of the order of $\eta^2$. The cost is that of determining $\overline{A}^{0,N}_{\rm 1 \, def}$, i.e. ${\cal C}_N$.

\medskip

Obviously, the control variate approach is always more efficient than the Monte Carlo approach. However, to reach the same accuracy as the weakly stochastic approach, one would need to take $M=\eta^{-2}$ realizations, leading to a cost much larger than with the weakly stochastic approach. The same observation holds when using the control variate approach using the second order surrogate model. Therefore, in the regime $\eta \ll 1$, the weakly stochastic approach~\eqref{eq:dl-weakly-random} is the most efficient one.  

\section{Numerical results}
\label{sec:num} 

We consider the so-called random checkerboard case, in dimension $d=2$ (see Fig.~\ref{fig:checkerboard}). It falls into the framework~\eqref{eq:weak-random}--\eqref{eq:random-form1}--\eqref{eq:ber} with
\begin{equation}
\label{eq:def_alpha_beta}
A_{\rm per}(x) = \alpha \Id_2 \quad \text{and} \quad C_{\rm per}(x) = \beta \Id_2.
\end{equation}
In what follows, we choose $\alpha = 3$ and $\beta = 23$ (in Section~\ref{sec:num_low}) or $\beta = 103$ (in Section~\ref{sec:num_high}). All variances are estimated on the basis of $M=100$ independent realizations.

\begin{figure}[htbp]
\centerline{
\includegraphics[width=5cm,angle=90]{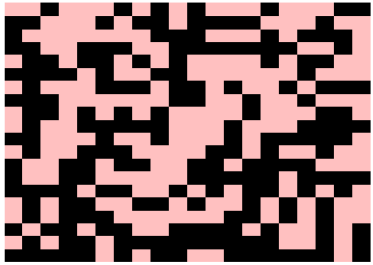}
}
\caption{A typical realization of the checkerboard test-case with $\eta=1/2$.
\label{fig:checkerboard}
}
\end{figure}

\subsection{Low contrast test-case}
\label{sec:num_low}

We choose here $(\alpha,\beta) = (3,23)$. The motivation for this choice is that we already considered this test-case in~\cite{mprf,banff,cedya} when introducing an antithetic variable approach. We are thus in position to compare the results obtained here with our previous results. 

On Fig.~\ref{fig:homogenized_wrt_eta_low}, we plot as a function of $\eta \in (0,1)$ three quantities:
\begin{itemize}
\item the first entry of the matrix $\EE \left[ A^\star_{\eta,N} \right]$ (obtained in practice by an expensive Monte Carlo estimation);
\item the weakly stochastic approximation~\eqref{eq:dl-weakly-random}, which is an approximation of $\EE \left[ A^\star_{\eta,N} \right]$ with an error of the order of $O_N(\eta^3)$;
\item the weakly stochastic approximation obtained in the regime $(1-\eta) \ll 1$, which is an approximation of $\EE \left[ A^\star_{\eta,N} \right]$ with an error of the order of $O_N\big((1-\eta)^3\big)$.
\end{itemize}
In all cases, we work with $N=10$, and the following observations are also valid for larger values of $N$. We see on Fig.~\ref{fig:homogenized_wrt_eta_low} that, when $\eta \leq 0.4$, the deterministic expansion~\eqref{eq:dl-weakly-random} is a very accurate approximation of $\EE \left[ \left( A^\star_{\eta,N} \right)_{11} \right]$. This approximation is inexpensive to compute. The same observation holds in the regime $\eta \geq 0.7$, where the deterministic expansion around $\eta=1$ provides a satisfying approximation. However, we note that none of the two weakly stochastic expansions are accurate when $0.4 \leq \eta \leq 0.7$. In that regime, one has to compute $\EE \left[ \left( A^\star_{\eta,N} \right)_{11} \right]$ by considering several realizations of~\eqref{eq:A-N}--\eqref{eq:correcteur-random-N}. In that regime, considering a variance reduction approach is useful. 

\begin{figure}[htbp]
\centerline{
\includegraphics[width=8cm]{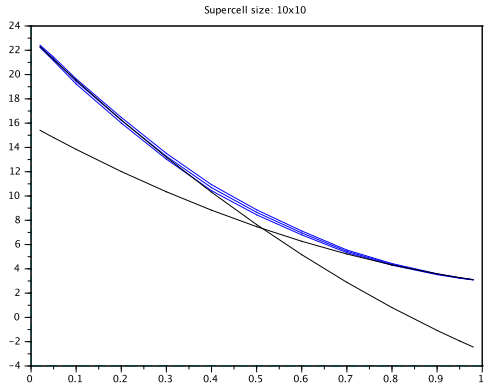}
}
\caption{$\EE \left[ \left( A^\star_{\eta,N} \right)_{11} \right]$ as a function of $\eta$, for $N=10$. Black curves: weakly stochastic approximations. Blue curve: Monte Carlo standard estimator. 
\label{fig:homogenized_wrt_eta_low}
}
\end{figure}

\medskip

In the regime we have identified, we show on Fig.~\ref{fig:var_red_wrt_eta_low_mid} the ratios of variance 
\begin{equation}
\label{eq:ratio_eff}
R_{\eta,N} = \frac{\Var\Big(\left[ A^\star_{\eta,N} \right]_{11}\Big)}{\Var(D)},
\end{equation}
where $D$ is either the first-order controlled variable $D_\rho^{1,\eta}(\omega)$ defined by~\eqref{eq:control-1}, or the second-order controlled variable $D_{\rho_1,\rho_2}^{2,\eta}(\omega)$ defined by~\eqref{eq:control-2}, or the controlled variable $D_{\rho_1,\rho_2,\rho_3}^{3,\eta}(\omega)$ defined by~\eqref{eq:control-3}. The parameter $\rho$ (resp. $(\rho_1,\rho_2)$ and $(\rho_1,\rho_2,\rho_3)$) is chosen to minimize the variance of the estimator. In this section, we exactly compute (up to finite element errors) the quantities $\overline{A}^{k,l,N}_{\rm 2 \, def}$ needed to build the controlled variables~\eqref{eq:control-2} and~\eqref{eq:control-3}. In Section~\ref{sec:num_RB} below, we approximate them using a Reduced Basis approach. We postpone until that section the discussion on computational costs and only focus here on accuracy.

\begin{remark}
The second-order controlled variable $D_{\rho_1,\rho_2}^{2,\eta}(\omega)$ defined by~\eqref{eq:control-2} is built by considering $A_{\rm per}$ as the reference. One could alternatively build a second-order controlled variable considering $C_{\rm per}$ as the reference. Numerical results obtained with such a controlled variable are similar to those obtained with $D_{\rho_1,\rho_2}^{2,\eta}(\omega)$ (results not shown). 
\end{remark}

\begin{figure}[htbp]
\centerline{
\includegraphics[width=8cm]{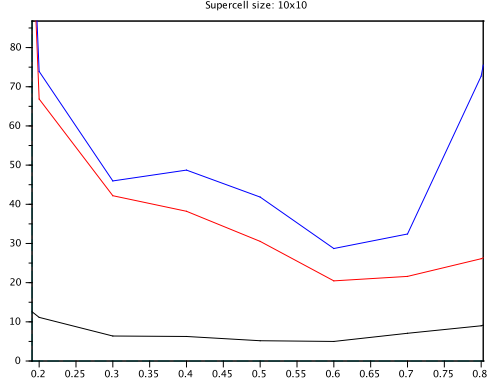}
}
\caption{Ratio $R_{\eta,N}$ defined by~\eqref{eq:ratio_eff} as a function of $\eta$ ($N=10$). Black curve: controlled variable $D_\rho^{1,\eta}(\omega)$. Red curve: controlled variable $D_{\rho_1,\rho_2}^{2,\eta}(\omega)$. Blue curve: controlled variable $D_{\rho_1,\rho_2,\rho_3}^{3,\eta}(\omega)$.
\label{fig:var_red_wrt_eta_low_mid}
}
\end{figure}

\medskip

We observe on Fig.~\ref{fig:var_red_wrt_eta_low_mid} that, for $\eta = 1/2$, the approach using the first-order controlled variable~\eqref{eq:control-1} provides a variance reduction ratio~\eqref{eq:ratio_eff} close to 6. This gain is close to the gain obtained using an antithetic variable approach (see~\cite[Table 2]{cedya}). In contrast, when using the controlled variable~\eqref{eq:control-3} taking into account first order and second order corrections with respect to both the cases $\eta=0$ and $\eta=1$, we obtain a gain close to 40. 

\medskip

We now monitor how the gain depends on the size of the domain $Q_N$. To that aim, we show on Table~\ref{tab:var_red_wrt_eta_low} the ratio~\eqref{eq:ratio_eff} as a function of $N$, for $\eta=1/2$. We observe that the gain is essentially independent of $N$. 

\begin{table}[htdp]
\begin{center}
\begin{tabular}{|c|c|c|c|c|c|}
\hline
& $N=6$ & $N=10$ & $N=20$ & $N=30$ & $N=50$ \\
\hline
First order & 7.57 & 5.18 & 6.55 & 8.51 & 7.34 \\
\hline
Second order & 35.9 & 41.8 & 37.6 & 35.6 & 40.4 \\
\hline
\end{tabular}
\end{center}
\caption{Ratio $R_{\eta,N}$ defined by~\eqref{eq:ratio_eff} as a function of $N$ ($\eta=1/2$). First order: controlled variable $D_\rho^{1,\eta}(\omega)$. Second order: controlled variable $D_{\rho_1,\rho_2,\rho_3}^{3,\eta}(\omega)$.
\label{tab:var_red_wrt_eta_low}}
\end{table}

\begin{remark}
In the one-dimensional case, we have shown that the variance ratio is proportional to $N$ or $N^2$ (see Propositions~\ref{prop:theo_1D} and~\ref{prop:theo_1D_suite}). In the two-dimensional case, we do not observe such an excellent behavior for our approach. The gain rather seems to be independent of $N$ (see also Fig.~\ref{fig:var_red_RB}). Nevertheless, the variance ratio is significantly higher than 1, making the approach definitely superior to the standard Monte Carlo approach.
\end{remark}

\subsection{High contrast test-case}
\label{sec:num_high}

We now turn to a test-case with a larger contrast and set $(\alpha,\beta)= (3,103)$ in~\eqref{eq:def_alpha_beta}. On Fig.~\ref{fig:homogenized_wrt_eta_high}, we plot as a function of $\eta \in (0,1)$ the same three quantities as on Fig.~\ref{fig:homogenized_wrt_eta_low} (again with $N=10$). We again see that, when $0.3 \leq \eta \leq 0.7$, none of the two weakly stochastic expansions are accurate. This is the regime we focus on.

We also show on Fig.~\ref{fig:homogenized_wrt_eta_high} the ratios of variance~\eqref{eq:ratio_eff} for the same three control variate approaches as on Fig.~\ref{fig:var_red_wrt_eta_low_mid}. We observe that, for $\eta = 1/2$, the approach using the controlled variable~\eqref{eq:control-3} provides a gain close to 6.7. This gain is smaller than in the case of Section~\ref{sec:num_low} (the contrast is now larger), but still significant. As in the low-contrast test-case, the gain is essentially independent of $N$, as shown in Table~\ref{tab:var_red_wrt_eta_high}.

\begin{figure}[htbp]
\begin{center}
\includegraphics[width=6.2cm]{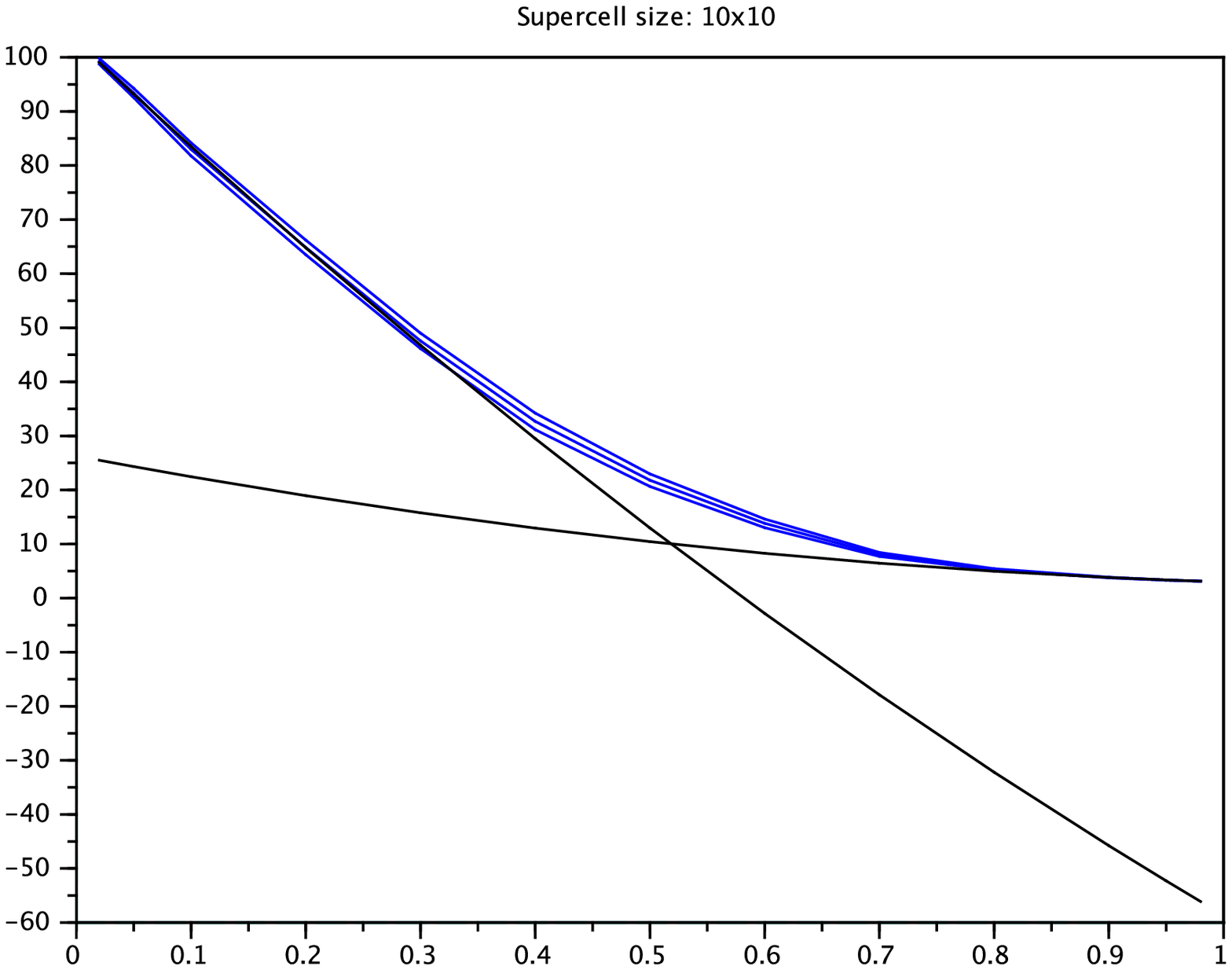}
\includegraphics[width=6.2cm]{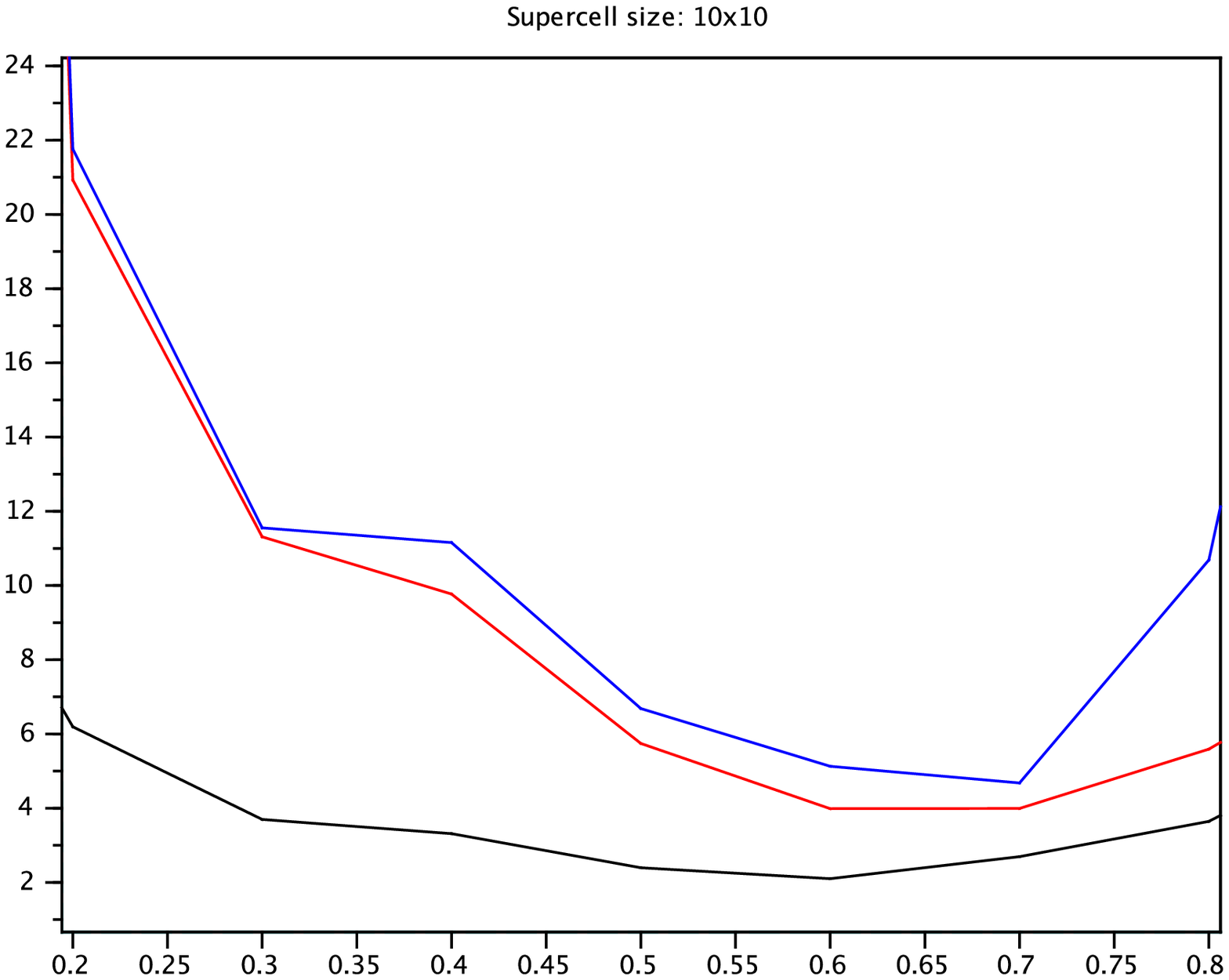}
\end{center}
\caption{Left: $\dis \EE \left[ \left( A^\star_{\eta,N} \right)_{11} \right]$ as a function of $\eta$, for $N=10$. Blue curve: standard Monte Carlo estimator. Black curves: weakly stochastic approximations. Right: Ratio $R_{\eta,N}$ defined by~\eqref{eq:ratio_eff} as a function of $\eta$ ($N=10$). Black curve: controlled variable $D_\rho^{1,\eta}(\omega)$. Red curve: controlled variable $D_{\rho_1,\rho_2}^{2,\eta}(\omega)$. Blue curve: controlled variable $D_{\rho_1,\rho_2,\rho_3}^{3,\eta}(\omega)$.
\label{fig:homogenized_wrt_eta_high}
}
\end{figure}

\begin{table}[htbp]
\begin{center}
\begin{tabular}{|c|c|c|c|}
\hline
& $N=10$ & $N=30$ & $N=50$ \\
\hline
First order & 2.40 & 3.62 &  3.87 \\
\hline
Second order & 6.69 & 6.32 & 5.82 \\
\hline
\end{tabular}
\end{center}
\caption{Ratio $R_{\eta,N}$ defined by~\eqref{eq:ratio_eff} as a function of $N$ ($\eta=1/2$). First order: controlled variable $D_\rho^{1,\eta}(\omega)$. Second order: controlled variable $D_{\rho_1,\rho_2,\rho_3}^{3,\eta}(\omega)$.
\label{tab:var_red_wrt_eta_high}}
\end{table}

\subsection{Using a Reduced Basis (RB) approach}
\label{sec:num_RB}

In Sections~\ref{sec:num_low} and~\ref{sec:num_high}, we have used the second-order surrogate model~\eqref{eq:control-3}, which takes into account the contributions from pairs of defects located at any site $k$ and $l$, namely $A^\star_{2,k,l,N}$ defined by~\eqref{eq:A-2N} and $C^\star_{2,k,l,N}$ defined by~\eqref{eq:D-2N}. These quantities are deterministic, and computed beforehand. However, in practice, computing these quantities is expensive, because we have to consider all possible configurations of pairs of defects.

This high computational cost can be decreased by using the Reduced Basis (RB) approach proposed in~\cite{clb_thomines}. This approach amounts to solving the one-defect problem~\eqref{eq:correcteur-1N}, and {\em a few} two-defects problems~\eqref{eq:correcteur-2N}, for $k=0$ and $l$ in some set ${\cal N}_N \subset \II_N \setminus \{ 0 \}$ (in practice, we solve~\eqref{eq:correcteur-2N} for some $l$ close to $k$). Then, it turns out that the solutions to the other two-defects problems, i.e. $w_p^{2,k,l,N}$ for $k=0$ and $l \notin {\cal N}_N$, can be well-approximated on the basis of $w_p^{1,0,N}$ and $\left\{ w_p^{2,k,l,N} \right\}_{k=0, \, l \in {\cal N}_N}$. 

In the sequel, we consider the low-contrast test-case (i.e. $(\alpha,\beta)= (3,23)$ in~\eqref{eq:def_alpha_beta}), set $\eta=1/2$, and use this RB approach in order to decrease the offline cost of our control variate approach.

\subsubsection{Robutness with respect to the RB basis set}

First, we evaluate the robustness of the gain in variance when we approximate the quantities $A^\star_{2,k,l,N}$ and $C^\star_{2,k,l,N}$ by the above RB approach, in contrast to computing them exactly (i.e., up to a small Finite Element error). To do so, we fix $N$ and monitor the variance ratio for the sets ${\cal N}_N$ shown on Fig.~\ref{fig:set_Nn}. Results are given in Table~\ref{tab:RB_N}. We see that the gain in variance is independent of the set ${\cal N}_N$: we can use the RB approach with a very small set of configurations for which the correctors $w_p^{2,k,l,N}$ are exactly computed (thereby dramatically decreasing the offline computational cost), and still retain an excellent variance reduction. 

\begin{figure}[htbp]
\begin{center}
  \def\Lnet{0.7}    
  \begin{tikzpicture}                               
    \draw[style=help lines] (-1.5*\Lnet,-1.5*\Lnet) grid[step=\Lnet] (6.5*\Lnet,6.5*\Lnet);
          \draw [fill=red,ultra thick] (1*\Lnet,0*\Lnet) rectangle (2*\Lnet,1*\Lnet);
          \draw [fill=red,ultra thick] (0*\Lnet,1*\Lnet) rectangle (1*\Lnet,2*\Lnet);
          \draw [fill=red,ultra thick] (0*\Lnet,3*\Lnet) rectangle (1*\Lnet,4*\Lnet);
          \draw [fill=red,ultra thick] (3*\Lnet,0*\Lnet) rectangle (4*\Lnet,1*\Lnet);
          \draw [fill=red,ultra thick] (4*\Lnet,3*\Lnet) rectangle (5*\Lnet,4*\Lnet);
          \draw [fill=red,ultra thick] (3*\Lnet,4*\Lnet) rectangle (4*\Lnet,5*\Lnet);
          \draw [fill=red,ultra thick] (4*\Lnet,1*\Lnet) rectangle (5*\Lnet,2*\Lnet);
          \draw [fill=red,ultra thick] (1*\Lnet,4*\Lnet) rectangle (2*\Lnet,5*\Lnet);
          
          \draw [fill=blue,ultra thick] (4*\Lnet,2*\Lnet) rectangle (5*\Lnet,3*\Lnet);
          \draw [fill=blue,ultra thick] (2*\Lnet,4*\Lnet) rectangle (3*\Lnet,5*\Lnet);
          \draw [fill=blue,ultra thick] (0*\Lnet,2*\Lnet) rectangle (1*\Lnet,3*\Lnet);
          \draw [fill=blue,ultra thick] (2*\Lnet,0*\Lnet) rectangle (3*\Lnet,1*\Lnet);

          \draw [fill=purple,ultra thick] (2*\Lnet,1*\Lnet) rectangle (3*\Lnet,2*\Lnet);
          \draw [fill=purple,ultra thick] (1*\Lnet,2*\Lnet) rectangle (2*\Lnet,3*\Lnet);
          \draw [fill=purple,ultra thick] (3*\Lnet,2*\Lnet) rectangle (4*\Lnet,3*\Lnet);
          \draw [fill=purple,ultra thick] (2*\Lnet,3*\Lnet) rectangle (3*\Lnet,4*\Lnet);

          \draw [fill=orange,ultra thick] (3*\Lnet,3*\Lnet) rectangle (4*\Lnet,4*\Lnet);
          \draw [fill=orange,ultra thick] (1*\Lnet,1*\Lnet) rectangle (2*\Lnet,2*\Lnet);
          \draw [fill=orange,ultra thick] (1*\Lnet,3*\Lnet) rectangle (2*\Lnet,4*\Lnet);
          \draw [fill=orange,ultra thick] (3*\Lnet,1*\Lnet) rectangle (4*\Lnet,2*\Lnet);
\end{tikzpicture}
\qquad 
  \def\Lnet{0.7}    
  \begin{tikzpicture}                               
    \draw[style=help lines] (-1.5*\Lnet,-1.5*\Lnet) grid[step=\Lnet] (6.5*\Lnet,6.5*\Lnet);
          \draw [fill=blue,ultra thick] (4*\Lnet,2*\Lnet) rectangle (5*\Lnet,3*\Lnet);
          \draw [fill=blue,ultra thick] (2*\Lnet,4*\Lnet) rectangle (3*\Lnet,5*\Lnet);
          \draw [fill=blue,ultra thick] (0*\Lnet,2*\Lnet) rectangle (1*\Lnet,3*\Lnet);
          \draw [fill=blue,ultra thick] (2*\Lnet,0*\Lnet) rectangle (3*\Lnet,1*\Lnet);

          \draw [fill=purple,ultra thick] (2*\Lnet,1*\Lnet) rectangle (3*\Lnet,2*\Lnet);
          \draw [fill=purple,ultra thick] (1*\Lnet,2*\Lnet) rectangle (2*\Lnet,3*\Lnet);
          \draw [fill=purple,ultra thick] (3*\Lnet,2*\Lnet) rectangle (4*\Lnet,3*\Lnet);
          \draw [fill=purple,ultra thick] (2*\Lnet,3*\Lnet) rectangle (3*\Lnet,4*\Lnet);

          \draw [fill=orange,ultra thick] (3*\Lnet,3*\Lnet) rectangle (4*\Lnet,4*\Lnet);
          \draw [fill=orange,ultra thick] (1*\Lnet,1*\Lnet) rectangle (2*\Lnet,2*\Lnet);
          \draw [fill=orange,ultra thick] (1*\Lnet,3*\Lnet) rectangle (2*\Lnet,4*\Lnet);
          \draw [fill=orange,ultra thick] (3*\Lnet,1*\Lnet) rectangle (4*\Lnet,2*\Lnet);
\end{tikzpicture}
\bigskip
\bigskip
  \def\Lnet{0.7}    
  \begin{tikzpicture}                               
    \draw[style=help lines] (-1.5*\Lnet,-1.5*\Lnet) grid[step=\Lnet] (6.5*\Lnet,6.5*\Lnet);
          \draw [fill=purple,ultra thick] (2*\Lnet,1*\Lnet) rectangle (3*\Lnet,2*\Lnet);
          \draw [fill=purple,ultra thick] (1*\Lnet,2*\Lnet) rectangle (2*\Lnet,3*\Lnet);
          \draw [fill=purple,ultra thick] (3*\Lnet,2*\Lnet) rectangle (4*\Lnet,3*\Lnet);
          \draw [fill=purple,ultra thick] (2*\Lnet,3*\Lnet) rectangle (3*\Lnet,4*\Lnet);

          \draw [fill=orange,ultra thick] (3*\Lnet,3*\Lnet) rectangle (4*\Lnet,4*\Lnet);
          \draw [fill=orange,ultra thick] (1*\Lnet,1*\Lnet) rectangle (2*\Lnet,2*\Lnet);
          \draw [fill=orange,ultra thick] (1*\Lnet,3*\Lnet) rectangle (2*\Lnet,4*\Lnet);
          \draw [fill=orange,ultra thick] (3*\Lnet,1*\Lnet) rectangle (4*\Lnet,2*\Lnet);
\end{tikzpicture}
\qquad
  \def\Lnet{0.7}    
  \begin{tikzpicture}                               
    \draw[style=help lines] (-1.5*\Lnet,-1.5*\Lnet) grid[step=\Lnet] (6.5*\Lnet,6.5*\Lnet);
           \draw [fill=purple,ultra thick] (2*\Lnet,1*\Lnet) rectangle (3*\Lnet,2*\Lnet);
          \draw [fill=purple,ultra thick] (1*\Lnet,2*\Lnet) rectangle (2*\Lnet,3*\Lnet);
          \draw [fill=purple,ultra thick] (3*\Lnet,2*\Lnet) rectangle (4*\Lnet,3*\Lnet);
          \draw [fill=purple,ultra thick] (2*\Lnet,3*\Lnet) rectangle (3*\Lnet,4*\Lnet);

 \end{tikzpicture}
\end{center}
\caption{Sets ${\cal N}_N$ of position of second defect that we consider to build the RB basis set (the first defect is always in the central white cell). 
Top left: $\text{Card } {\cal N}_N = 20$.
Top right: $\text{Card } {\cal N}_N = 12$.
Bottom left: $\text{Card } {\cal N}_N = 8$.
Bottom right: $\text{Card } {\cal N}_N = 4$.
\label{fig:set_Nn}
}
\end{figure}
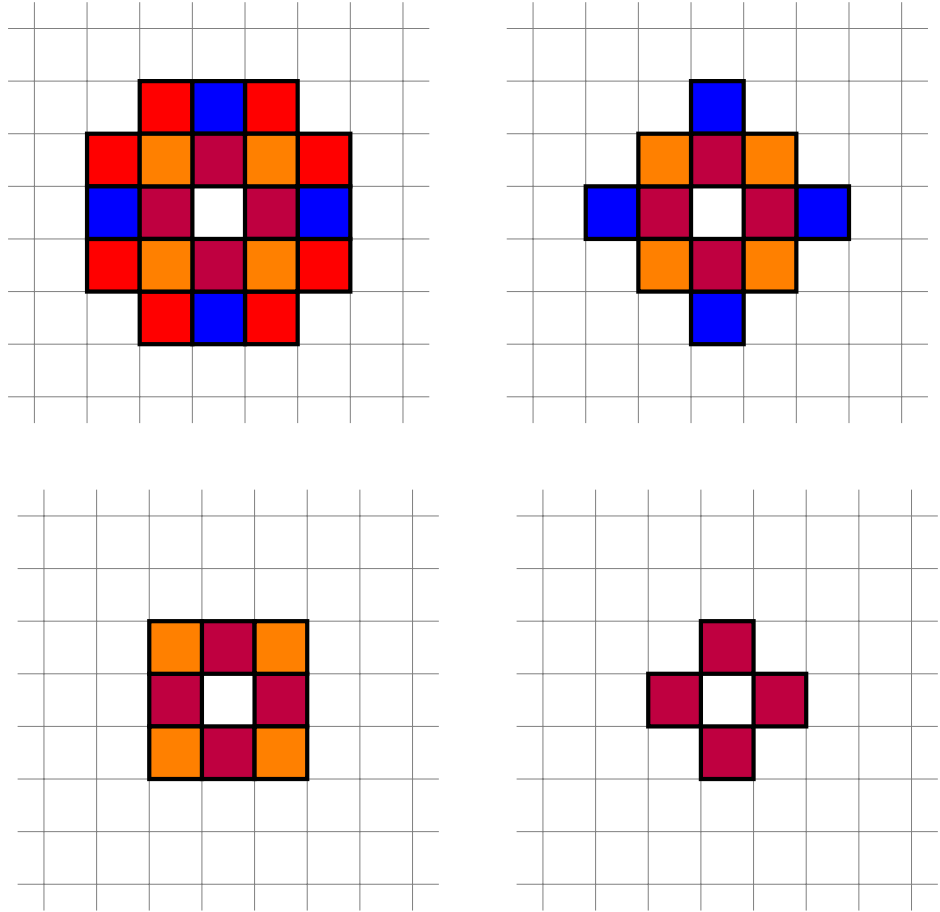

\begin{table}[htbp]
\begin{center}
\begin{tabular}{|c|c|c|}
\hline
& $N=6$ & $N=20$ \\
\hline
${\cal N}_N = \II_N \setminus \{ 0 \}$ & 35.9 & 37.6 \\
\hline
$\text{Card } {\cal N}_N = 20$ & 36.1 & 37.6 \\
\hline
$\text{Card } {\cal N}_N = 12$ & 35.7 & 37.0 \\
\hline
$\text{Card } {\cal N}_N = 8$ & 36.6 & 36.5 \\
\hline
$\text{Card } {\cal N}_N = 4$ & 36.6 & 37.6 \\
\hline
\end{tabular}
\end{center}
\caption{Ratio $R_{\eta,N}$ defined by~\eqref{eq:ratio_eff} for two values of $N$ ($\eta=1/2$), using the second order model $D_{\rho_1,\rho_2,\rho_3}^{3,\eta}(\omega)$ defined by~\eqref{eq:control-3}. The first line corresponds to the reference computation of $A^\star_{2,k,l,N}$ and $C^\star_{2,k,l,N}$. The subsequent lines correspond to using a RB approach to compute $A^\star_{2,k,l,N}$ and $C^\star_{2,k,l,N}$, with a decreasing set ${\cal N}_N$.
\label{tab:RB_N}}
\end{table}

\medskip

Following the above idea, we have also tested the approach when we set $\overline{A}^{k,l,N}_{\rm 2 \, def} = \overline{C}^{k,l,N}_{\rm 2 \, def} = \Id$ in~\eqref{eq:Y2} and~\eqref{eq:Y3} for any $k \neq l$ (which amounts to setting $A^\star_{2,k,l,N} = \Id + 2 A^\star_{1,0,N} - A^\star_{\rm per}$, see~\eqref{eq:marg2}). We do not expect (and this is indeed the case) to obtain good results. The controlled variable reads
\begin{multline}
\label{eq:control-3-zozo}
D_{\rho_1,\rho_2,\rho_3}^{3,\eta,{\rm approx}}(\omega) 
= 
A^\star_{\eta,N}(\omega) - \rho_1 \left( A^{\eta,N}_1 (\omega) - \eta \overline{A}^N_1 \right) 
\\
- \frac{\rho_2}{2} \sum_{k \neq l \in \II_N} \Big( B^\eta_k (\omega) B^\eta_l (\omega) - \EE \left[ B^\eta_k B^\eta_l \right] \Big) 
\\
- \frac{\rho_3}{2} \sum_{k \neq l \in \II_N} \Big( (1-B^\eta_k(\omega)) (1-B^\eta_l(\omega)) - \EE \left[ (1-B^\eta_k) (1-B^\eta_l) \right] \Big)
\end{multline}
instead of~\eqref{eq:control-3}. Computing the second order surrogate model is then extremely cheap, and as expensive as computing the first order surrogate model: one only has to solve the one-defect problem~\eqref{eq:correcteur-1N}. In that case, for $N=20$ and $\eta=1/2$, the variance ratio is equal to 6.96, which is extremely close to the variance ratio obtained by simply using the first order model (see Table~\ref{tab:var_red_wrt_eta_low}), which is equal to 6.55. Considering the last two lines in~\eqref{eq:control-3-zozo} therefore does not improve the efficiency.

\medskip

The above results show that it is not needed to compute with a high accuracy the quantities $A^\star_{2,k,l,N}$ and $C^\star_{2,k,l,N}$ to obtain a significant variance reduction. Using a RB approach with a very small set ${\cal N}_N$ is sufficient and the gain (in terms of variance reduction) is essentially the same as that if $A^\star_{2,k,l,N}$ and $C^\star_{2,k,l,N}$ are exactly computed. However, even though the approach is quite flexible, it still requires approximations of $A^\star_{2,k,l,N}$ and $C^\star_{2,k,l,N}$ with a reasonable accuracy. Otherwise, the efficiency significantly drops down, as shown by our last test. 

\subsubsection{Results as a function of $N$}

We now fix the RB basis set corresponding to $\text{Card } {\cal N}_N = 12$ on Fig.~\ref{fig:set_Nn}, and compare the Monte Carlo results with our control variate results, using the controlled variable~\eqref{eq:control-3}. To evaluate the Monte Carlo estimator
$$
I^{\rm MC}_M = \frac{1}{M} \sum_{m=1}^M A^{\star,m}_{\eta,N}(\omega),
$$
we need to solve $M$ corrector problems. In contrast, to evaluate the Control Variate estimator
$$
I^{\rm CV}_M := \frac{1}{M} \sum_{m=1}^M D_{\rho_1,\rho_2,\rho_3}^{3,\eta,m}(\omega), 
$$
we need to solve first the problem~\eqref{eq:correcteur-1N} and the problems~\eqref{eq:correcteur-2N} for $k=0$ and $l \in {\cal N}_N$, and second $M$ corrector problems. Let ${\cal C}_N$ be the cost to solve a single corrector problem on $Q_N$. Then the Monte Carlo cost is $M \, {\cal C}_N$, the Control Variate offline cost is $(1+{\cal N}_N) \, {\cal C}_N = 13 {\cal C}_N$, and its online cost is $M \, {\cal C}_N$. In the sequel, we work with $M=100$, therefore the Control Variate cost is just 13\% higher than the Monte Carlo cost.

\medskip

First, we plot on Fig.~\ref{fig:homogenized_RB} the confidence intervals obtained for the Monte Carlo approach and the Control Variate approach based on~\eqref{eq:control-3}. The latter confidence interval width is dramatically smaller than the former. 

\begin{figure}[htbp]
\centerline{
\includegraphics[width=10cm]{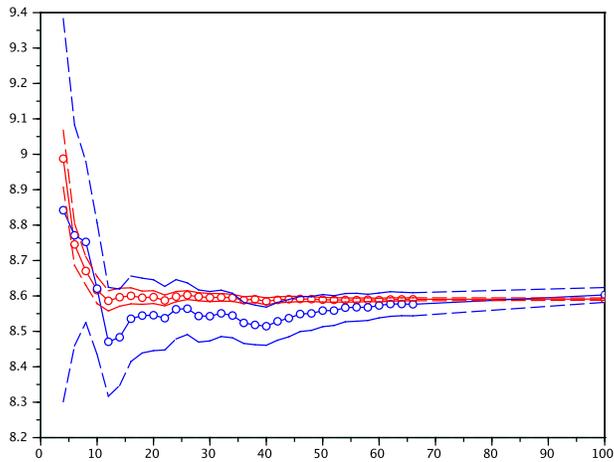}
}
\caption{Estimation of $\EE \left( \left[ A^\star_{\eta,N} \right]_{11} \right)$ as a function of $N$. Blue: standard Monte-Carlo estimator. Red: Control Variate estimator based on~\eqref{eq:control-3}. In both cases, estimators are built using $M=100$ i.i.d. realizations.
\label{fig:homogenized_RB}
}
\end{figure}

\medskip

We next show on Fig.~\ref{fig:var_red_RB} the variance ratios~\eqref{eq:ratio_eff}. They somewhat vary with $N$. Recall that these ratios are computed on the basis of $M=100$ i.i.d. realizations. From one set of i.i.d. realizations to another, results may slightly vary, although qualitative conclusions remain alike. For the first order method based on~\eqref{eq:control-1}, the variance ratio is between 5 and 10, whereas it is around 30 or more for the second order method based on~\eqref{eq:control-3}.

\begin{figure}[htbp]
\centerline{
\includegraphics[width=8cm]{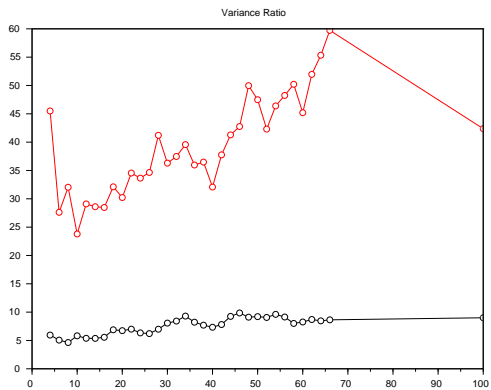}
}
\caption{Variance ratio~\eqref{eq:ratio_eff} as a function of $N$. Black curve: using the first order controlled variable~\eqref{eq:control-1}. Red curve: using the second order controlled variable~\eqref{eq:control-3}. We have considered all values $N \in \{4, 6,\dots, 66 \}$ as well as $N=100$.
\label{fig:var_red_RB}}
\end{figure}

\medskip

We plot on Fig.~\ref{fig:control_magnitude} the optimal values of $\rho_1$, $\rho_2$ and $\rho_3$, solution to~\eqref{eq:system_rho}. None of these parameters is close to 0: all random variables $A^{\eta,N}_1(\omega)$, $A^{\eta,N}_2(\omega)$ and $C^{\eta,N}_2(\omega)$ are useful in~\eqref{eq:control-3} to decrease the variance. 

\begin{figure}[htbp]
\centerline{
\includegraphics[width=8cm]{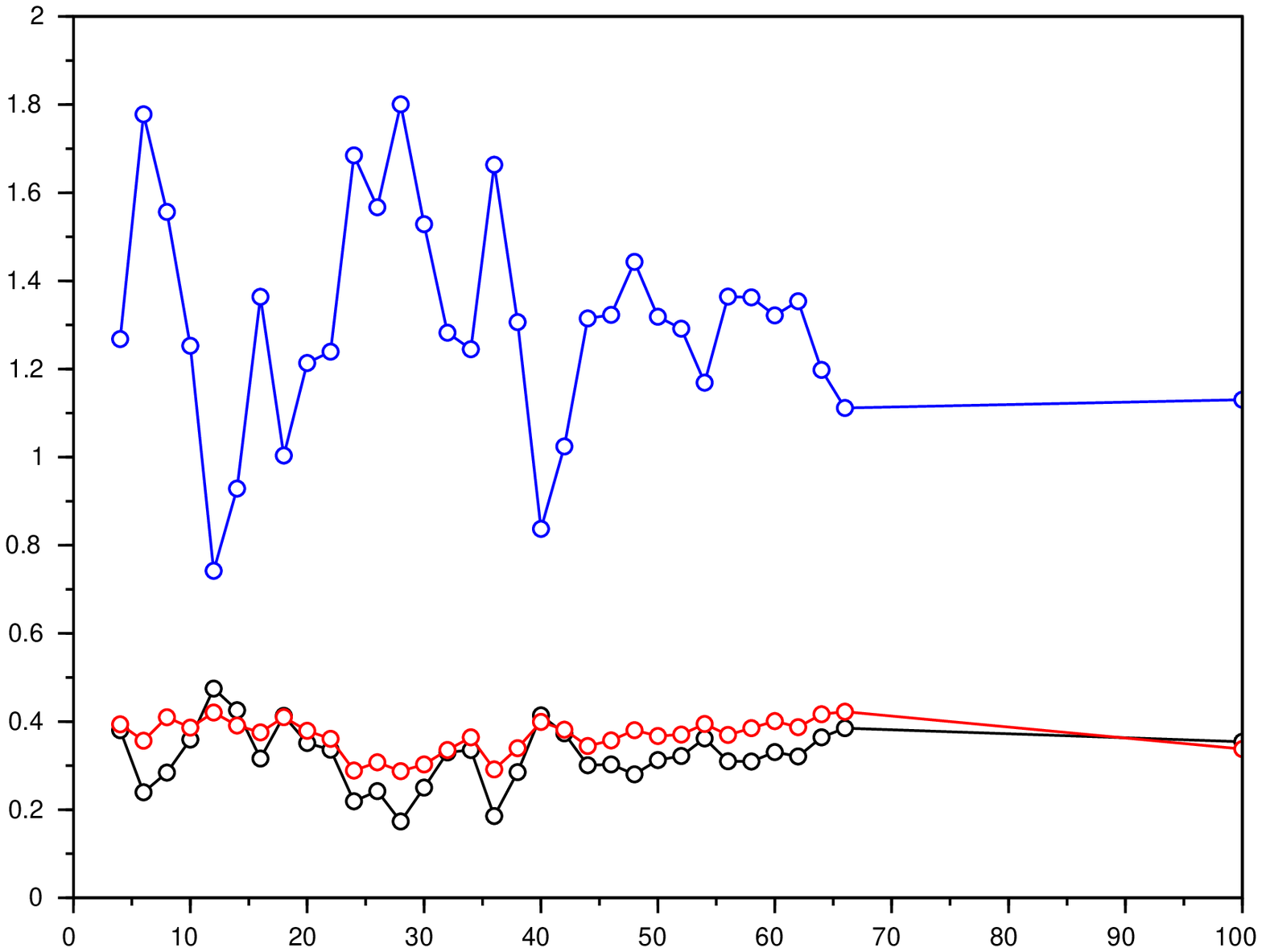}
}
\caption{Optimal values of $\rho_1$ (black), $\rho_2$ (red) and $\rho_3$ (blue) for the controlled variable~\eqref{eq:control-3} as a function of $N$. We have considered all values $N \in \{4, 6,\dots, 66 \}$ as well as $N=100$.
\label{fig:control_magnitude}}
\end{figure}

\medskip

On Fig.~\ref{fig:bias_RB}, we eventually plot the complete errors, that is
\begin{equation}
\label{eq:errors}
e^{\rm MC}_{N,M} = \left| \frac{1}{M} \sum_{m=1}^M A^{\star,m}_{\eta,N}(\omega) - A^\star_\eta \right|,
\quad
e^{\rm CV}_{N,M} = \left| \frac{1}{M} \sum_{m=1}^M D^{3,\eta,m}_{\rho_1,\rho_2,\rho_3}(\omega) - A^\star_\eta \right|,
\end{equation}
where the exact value $A^\star_\eta$ is actually approximated using $M_{\rm ref}$ realizations on a large domain $Q_{N_{\rm ref}}$. These errors are a sum of:
\begin{itemize}
\item the bias error $\EE \left[A^\star_{\eta,N} \right] - A^\star_\eta$,
\item the statistical error, which scales as $\dis \sqrt{ \Var \Big( A^\star_{\eta,N} \Big) / M }$ for the Monte-Carlo approach and $\dis \sqrt{ \Var \Big( D^{3,\eta}_{\rho_1,\rho_2,\rho_3} \Big) / M}$ for the Control Variate approach.
\end{itemize}
When $d \geq 3$, the variance of $A^\star_{\eta,N}$ has been shown to scale as $N^{-d}$ in~\cite[Theorem 1.3 and Proposition 1.4]{nolen}. For homogenization problems set on random {\em lattices}, optimal estimates on the above two errors have been established in~\cite[Theorem 2]{GNO_invent} for any $d \geq 2$: the former scales $N^{-d} (\ln N)^d$ while $\Var \Big( A^\star_{\eta,N} \Big)$ scales as $N^{-d}$. 

In the standard Monte Carlo approach, for large values of $N$, we expect the statistical error to dominate, and thus the error to be of the order of $N^{-d/2}$. This is indeed what we observe on the blue curve of Fig.~\ref{fig:bias_RB}. For the Control Variate approach, we observe that the error decreases as $N^{-d}$ (see red curve of Fig.~\ref{fig:bias_RB}). This is consistent with the fact that, for the values of $N$ we consider, the statistical error has been dramatically decreased and is now smaller than the bias error. 

\begin{figure}[htbp]
\centerline{
\includegraphics[width=8cm]{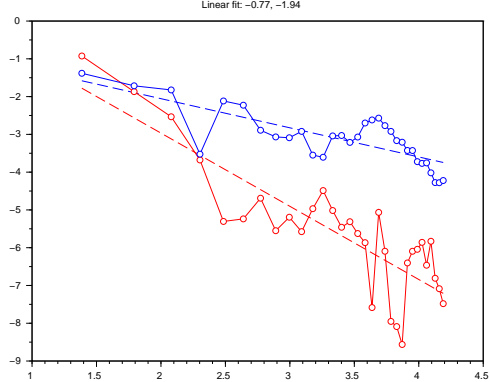}
}
\caption{Errors~\eqref{eq:errors} as a function of $N$ ($M=100$; log-log plot). Blue curve (with slope -0.77): Monte-Carlo approach. Red curve (with slope -1.94): Control Variate approach using~\eqref{eq:control-3}. The reference value has been computed using $N_{\rm ref} = 100$ and $M_{\rm ref}=100$.
\label{fig:bias_RB}}
\end{figure}

\section*{Acknowledgments} 

The work of FL and WM is partially supported by ONR under Grant N00014-12-1-0383 and EOARD under grant FA8655-13-1-3061. WM gratefully acknowledges the support from Labex MMCD (Multi-Scale Modelling \& Experimentation of Materials for Sustainable Construction) under contract ANR-11-LABX-0022. We also wish to thank Claude Le Bris and Xavier Blanc for enlightning discussions.

\end{document}